\author{
Robert Luko\v{t}ka\\Comenius University, Bratislava, Slovakia\\{\small\tt lukotka\@@dcs.fmph.uniba.sk}
}
\title{Circular flow number of Goldberg snarks}
\theoremstyle{definition}
\newtheorem*{theorem*}{Theorem}
\newtheorem{definition}{Definition}
\newtheorem{theorem}[definition]{Theorem}
\newtheorem{lemma}[definition]{Lemma}
\theoremstyle{claim}
\newtheorem{claim}{Claim}
\let\epsilon=\varepsilon
\let\phi = \varphi
\begin{document}

\maketitle


\abstract{
A \emph{circular nowhere-zero $r$-flow} on a bridgeless graph $G$ is an orientation of the edges and an assignment of real values from $[1, r-1]$ to the edges in such a way that the sum of incoming values equals the sum of outgoing values for every vertex. The \emph{circular flow number} of $G$ is the infimum over all values $r$ such that $G$ admits a nowhere-zero $r$-flow. We prove that the circular glow number of Goldberg snark $G_{2k+1}$ is $4+1/(k+1)$, proving a conjecture of Goedgebeur, Mattiolo, and Mazzuoccolo.
\smallskip

\noindent \textbf{Keywords.} ciruclar flow number, snarks.
}

\bigskip


\section{Introduction}

A \emph{circular nowhere-zero $r$-flow} on a bridgeless graph $G$ is an orientation of the edges and an assignment of real values from $[1, r-1]$ to the edges in such a way that the sum of incoming values equals the sum of outgoing values for every vertex. The \emph{circular flow number}, we denote it simply by $\phi(G)$, of $G$ is the infimum over all values $r$ such that $G$ admits a nowhere-zero $r$-flow. The infimum from the definition is attained and is rational \cite{gtz}. The circular flow number is a natural refinement of the flow number of a graph, $\lceil \phi(G) \rceil$ equals the flow number of $G$ \cite{gtz}.

\emph{Snarks} are bridgeless cubic graphs that are not $3$-edge-colourable. Snarks are intensively studied mostly because many important conjectures and statements can be easily reduced to snarks, e.g. Four Colour Theorem, $5$-flow Conjecture, Cycle Double Cover Conjecture. The circular flow number of a cubic graph equals $3$ if and only if the graph is bipartite and equals $4$ if and only if the graph is non-bipartite and $3$-edge-colourable \cite{steffen, diestel}. Thus, among cubic graphs, only snarks are of interest with respect to the circular flow number and their circular flow number is more than $4$. Tutte's $5$-flow Conjecture \cite{tutte} asserts that the circular flow number of any graph is at most $5$. On the other hand any rational value between $4$ and $5$ is attainable as a circular flow number of a snark \cite{lukotka2}. 

There are three widely studied infinite classes of snarks that represent various methods of constructions: Isaac's snarks, Generalised Blanu{\v s}a snarks, and Goldberg snarks. For Isaacs snark $I_{2k+1}$ the upper bound $\phi(I_{2k+1}) \le 4+1/k$  was proved by Steffen \cite{steffen} and the corresponding lower bound was obtained by Luko{\v t}ka and {\v S}koviera \cite{lukotka}. The circular flow number of Generalized Blanu{\v s}a snarks is $4.5$ \cite{lukotka3}. The Goldberg snarks turned out to be the most challenging among these three classes. 

\emph{Goldberg snark} $G_{2k+1}$ is a graph with vertex set 
$\{a_i, b_i, c_i, d_i, e_i, f_i, g_i, \allowbreak h_i: i\in \mathbb{Z}_{2k+1}\}$ 
and edge set
$\{a_ib_i, b_ia_{i+1}, a_ie_i, b_ic_i, c_id_i, d_ie_i, c_if_i, e_ig_i, f_ig_i, \allowbreak g_if_{i+1}, d_ih_i, h_ih_{i+1}: i\in \mathbb{Z}_{2k+1}\}$ (see Figure~\ref{fig1}). 
\begin{figure}
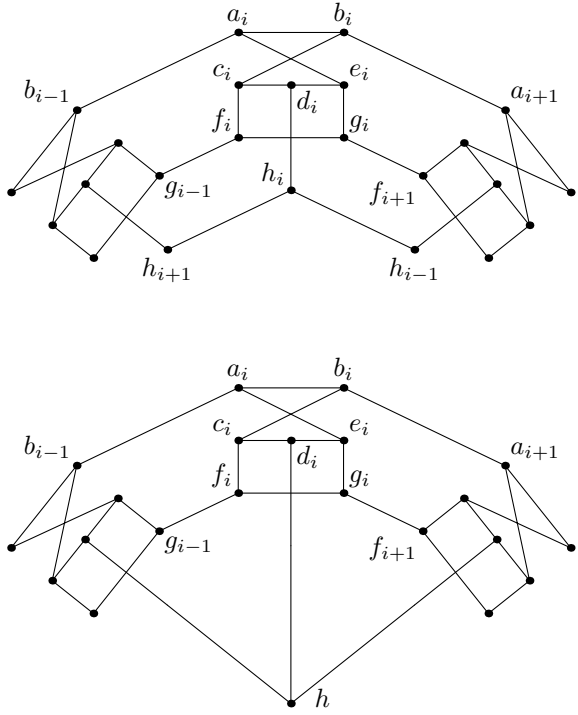
\label{fig1}
\begin{center}
\includegraphics{goldberg5.mps} \\
\includegraphics{goldberg6.mps}
\end{center}
\caption{Parts of Goldberg snark (top) and reduced Goldberg graph (bottom).}
\end{figure}
Theorem~2.4 from \cite{lukotka} implies $\phi(G_{2k+1})\ge 4+1/(2k+1)$. 
Luko{\v t}ka \cite{lukotka3} proved that $\phi(G_{2k+1})\le 4+1/k$. 
Recently, Goedgebeur, Mattiolo, and Mazzuoccolo \cite{goedgebeur} proved $\phi(G_{2k+1})\le 4+1/(k+1)$ and, based on computational evidence, conjectured that this is the correct value. In this paper we prove this conjecture.

Instead of considering Goldberg snark $G_{2k+1}$ we will consider \emph{Goldberg graph} $H_{2k+1}$ obtained by contracting all vertices in $\{h_i: i\in \mathbb{Z}_{2k+1}\}$ into a single vertex $h$ of degree $2k+1$ (see Figure~\ref{fig1}). We will show that $\phi(H_{2k+1}) \ge 4+1/(k+1)$ and thus also $\phi(G_{2k+1}) \ge 4+1/(k+1)$, completing the bound of Goedgebeur, Mattiolo, and Mazzuoccolo.

\section{Balanced valuations}

We use balanced valuations introduced by Jaeger \cite{jaegerBV}, but to avoid fractions use a definition from \cite{lukotka4}. 
Let $\partial(S)$ denote the number of edges with exactly one end in $S$.
Let $G$ be a bridgeless graph.
A \emph{balanced valuation} is a mapping $b: V(G) \to \mathbb{Z}$ such that 
\begin{itemize}
\item $b(S) \le \partial(S)$ for each $S\subseteq V(G)$ and
\item $b(v) \equiv \partial(v) (\bmod \ 2)$ for each $v\in V(G)$,
\end{itemize}
where $b(S)$ denotes the sum of $b(v)$ for all $v\in S$. To avoid zeroes in denominators we will mostly consider only subsets $S\subseteq V(G)$ with $\partial(S) \neq 0$, to write that $S$ is such subset we use $S\sqsubset G$. Note that for sets $S$ with $\partial(S) = 0$ the definition of balanced valuation implies $b(S)=0$.
A balanced valuation is \emph{orientable} if for each $S \sqsubset G$, we have $b(S) < \partial(S)$. Note that if $G$ has an orientable balanced valuation, then $G$ is bridgeless.

Given a circular nowhere-zero $r$-flow on $G$, we can create an orientable balanced valuation $b$ on $G$ by taking $b(v)$ to be equal to the number of incoming minus number of outgoing edges of $v$.
As the maximal flow value is $r-1$ one can bound the ratio of incoming and outgoing edges to $S$ and which in turn can be expressed using $b(S)$ and $\partial(S)$. The resulting condition is not only necessary, but also sufficient.
\begin{theorem}\cite{jaegerBV, lukotka4}\label{bv}
Graph $G$ has nowhere-zero circular $r$-flow if and only if there exists an orientable balanced valuation $b$ of $G$ such that for each $S \sqsubset G$,
\begin{eqnarray}
(\partial(S) + b(S))/(\partial(S) - b(S)) \le r-1. \label{eqn}
\end{eqnarray}
\end{theorem}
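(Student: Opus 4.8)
\medskip
\noindent\textbf{Proof plan.} The plan is to prove the two implications separately, the forward one being essentially the computation already sketched above. Suppose first that $G$ carries a circular nowhere-zero $r$-flow given by an orientation $D$ together with a value function $\phi\colon E(G)\to[1,r-1]$, and set $b(v):=d^-_D(v)-d^+_D(v)$, the number of incoming minus the number of outgoing edges at $v$. Then $b(v)=\partial(v)-2d^+_D(v)\equiv\partial(v)\pmod 2$. Fix $S\sqsubset G$ and split the cut $\partial(S)$ into the edges that $D$ directs out of $S$ and those it directs into $S$; the edges inside $G[S]$ cancel in $\sum_{v\in S}b(v)$, so $b(S)$ equals (number of cut edges entering $S$) minus (number leaving $S$), and hence $\partial(S)+b(S)$ and $\partial(S)-b(S)$ are, respectively, twice the number of cut edges entering $S$ and twice the number leaving it. Summing the conservation conditions over $v\in S$ shows that the total $\phi$-value on the cut edges entering $S$ equals that on the cut edges leaving $S$; since all values lie in $[1,r-1]$ and $\partial(S)\neq 0$, both of these edge sets are non-empty, so $b(S)<\partial(S)$ (the reverse inequality following by applying this to $V(G)\setminus S$, and $b(S)=0$ when $\partial(S)=0$), which together with the parity relation makes $b$ an orientable balanced valuation, and moreover $(\text{number entering})\le\sum_{\text{entering}}\phi=\sum_{\text{leaving}}\phi\le(r-1)(\text{number leaving})$, which is precisely \eqref{eqn}.

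For the converse, given an orientable balanced valuation $b$ satisfying \eqref{eqn}, I would produce the flow in two steps. \emph{Step 1: realise $b$ by an orientation.} Put $a(v):=(\partial(v)-b(v))/2$; this is an integer by the parity condition, and $0\le a(v)\le\partial(v)$ since $|b(v)|\le\partial(v)$ (from orientability applied to $\{v\}$ and to its complement, using $b(V(G))=0$). A short count gives $\sum_{v\in U}a(v)=|E(G[U])|+(\partial(U)-b(U))/2$ for every $U\subseteq V(G)$, so the balanced-valuation inequalities $-\partial(U)\le b(U)\le\partial(U)$ are exactly the conditions $|E(G[U])|\le\sum_{v\in U}a(v)\le|E(G[U])|+\partial(U)$, while $\sum_{v}a(v)=|E(G)|$. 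By the classical characterisation of graphs admitting an orientation with prescribed out-degrees (Hakimi's theorem, or a max-flow argument), there is an orientation $D$ of $G$ with $d^+_D(v)=a(v)$, equivalently $d^-_D(v)-d^+_D(v)=b(v)$, for all $v$.

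\emph{Step 2: fit a bounded circulation onto $D$.} Now I look for $\phi\colon E(G)\to[1,r-1]$ that is a circulation with respect to $D$. Exactly as in the forward direction, for every $U\subseteq V(G)$ the number of edges of $\partial(U)$ that $D$ directs out of $U$ equals $(\partial(U)-b(U))/2$ and the number it directs into $U$ equals $(\partial(U)+b(U))/2$, and orientability ensures both are positive whenever $\partial(U)\neq 0$. Hoffman's circulation theorem provides a circulation with all edge values in $[1,r-1]$ precisely when, for every $U$, $(\text{edges leaving }U)\le(r-1)(\text{edges entering }U)$; since $\partial(U)=\partial(V(G)\setminus U)$ and $b(V(G)\setminus U)=-b(U)$, this inequality for $U$ is exactly \eqref{eqn} for $V(G)\setminus U$ (the cases $\partial(U)=0$ being vacuous). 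Hence the circulation $\phi$ exists, and $(D,\phi)$ is the desired circular nowhere-zero $r$-flow.

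The cut bookkeeping that expresses everything through $\partial(S)$ and $b(S)$ is routine; the real content of the converse is the two feasibility theorems invoked in Steps 1 and 2 together with the verification that the balanced-valuation axioms supply exactly their hypotheses. The main obstacle for a fully self-contained write-up is avoiding Hoffman's theorem: one would instead build the bounded nowhere-zero circulation on the fixed orientation $D$ by hand, successively rerouting flow along cycles while keeping every edge value inside $[1,r-1]$, where a little care is needed to see that the process terminates with conservation at every vertex.
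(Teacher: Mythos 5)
Your proof is correct and is essentially the standard argument from the cited sources (Jaeger; Luko\v{t}ka): the forward direction by the cut-counting computation, the converse by first realising $b$ as an orientation with prescribed out-degrees and then invoking Hoffman's circulation theorem with bounds $[1,r-1]$. The paper itself offers no proof of Theorem~\ref{bv} — it is quoted from \cite{jaegerBV, lukotka4} — so there is nothing further to compare against.
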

For $S \sqsubset G$ and an orientable balanced valuation $b$ we define $\phi(S, b) = (\partial(S) + b(S))/(\partial(S) - b(S)) +1$ and $\phi(b) = \max_{S\sqsubset G} \phi(S, b)$. Note that $\phi(G) = \min_{b \text{ is a balanced valuation of } G} \phi(b)$.
The following lemma is an easy corollary of the definitions.
\begin{lemma}\label{complement}
Let $G$ be a graph, let $b$ be an orientable balanced valuation on $G$ and let $S \sqsubset G$. Then
$\phi(S, b) = \phi(V(G)-S, -b)$ and $\phi(b) = \phi(-b)$.
\end{lemma}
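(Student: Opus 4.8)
The plan is to unwind the definition $\phi(S,b)=(\partial(S)+b(S))/(\partial(S)-b(S))+1$ and exploit the symmetry between a vertex set and its complement. First I would record two elementary identities. For any $S\subseteq V(G)$ we have $\partial(S)=\partial(V(G)-S)$, because an edge has exactly one end in $S$ iff it has exactly one end in $V(G)-S$; in particular $S\sqsubset G$ iff $V(G)-S\sqsubset G$. Second, $b(V(G)-S)=-b(S)$: applying the definition of a balanced valuation to the set $V(G)$ itself, whose edge boundary is empty, gives $b(V(G))=0$, and $b$ is additive over the disjoint decomposition $V(G)=S\cup(V(G)-S)$.

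Next I would verify that $-b$ is again an orientable balanced valuation, so that the right-hand sides make sense. The parity condition is immediate since $-b(v)\equiv b(v)\pmod 2$. For the bound $(-b)(T)\le\partial(T)$, apply the corresponding bound for $b$ to the complement and use the two identities: $(-b)(T)=-b(T)=b(V(G)-T)\le\partial(V(G)-T)=\partial(T)$; strict inequality when $\partial(T)\neq 0$ (orientability of $-b$) follows in the same way. This also records that $\partial(S)-b(S)>0$ for $S\sqsubset G$, so no denominator vanishes.

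With this in hand, the first equality is a one-line substitution: using $\partial(V(G)-S)=\partial(S)$ and $(-b)(V(G)-S)=-b(V(G)-S)=b(S)$,
\[
\phi(V(G)-S,-b)=\frac{\partial(V(G)-S)+(-b)(V(G)-S)}{\partial(V(G)-S)-(-b)(V(G)-S)}+1=\frac{\partial(S)+b(S)}{\partial(S)-b(S)}+1=\phi(S,b).
\]
For $\phi(b)=\phi(-b)$ I would note that $S\mapsto V(G)-S$ is an involution on the family $\{T:\partial(T)\neq 0\}$, hence
\[
\phi(b)=\max_{S\sqsubset G}\phi(S,b)=\max_{S\sqsubset G}\phi(V(G)-S,-b)=\max_{T\sqsubset G}\phi(T,-b)=\phi(-b).
\]
I do not expect a real obstacle here; the only step needing a moment's attention is checking that $-b$ is still an orientable balanced valuation, which is exactly where the identity $b(V(G))=0$ enters.
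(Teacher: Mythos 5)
Your proof is correct and is exactly the verification the paper has in mind: it omits the proof entirely, calling the lemma "an easy corollary of the definitions," and your argument via $\partial(V(G)-S)=\partial(S)$, $b(V(G))=0$ (the paper's own remark about sets with empty boundary), and the complementation involution is the intended one. The check that $-b$ is again an orientable balanced valuation is a worthwhile detail the paper leaves implicit.
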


The following two lemmas are implicitly present in the proof of Theorem 2.2 in \cite{lukotka}.
\begin{lemma}\label{connected1}
Let $G$ be graph, let $b$ be an orientable balanced valuation on $G$, and let $S \sqsubset G$ be such that $\phi(S, b) = \phi(b)$.
Assume that $S$ can be partitioned into two nonempty sets $A, B \sqsubset V(G)$  such that there is no edge between $A$ and $B$. Then $\phi(A, b) = \phi(B, b)= \phi(b)$.
\end{lemma}
\begin{proof}
From the definition of $\phi(b)$ we have $\phi(A, b) \le \phi(b)$ and $\phi(B, b) \le \phi(b)$. For contradiction, without loss of generality, assume $\phi(A, b) < \phi(b)$.
\begin{eqnarray*}
\phi(S, b) &=& \frac{\partial(S) + b(S)}{\partial(S) - b(S)}+1
            =  \frac{\partial(A)+\partial(B)+b(A)+b(B)}{\partial(A)+\partial(B)-b(A)-b(B)} +1\\                
           &=& \frac{\partial(A)+b(A)}{\partial(A)-b(A)+\partial(B)-b(B)} +
               \frac{\partial(B)+b(B)}{\partial(A)-b(A)+\partial(B)-b(B)} + 1\\
           &=& \phi(A, b) \frac{\partial(A)-b(A)}{\partial(A)-b(A)+\partial(B)-b(B)} \ +\\
           & &  \ \ \ \ \ \ \ \ \ \ \ \  + \ \phi(B, b) \frac{\partial(B)-b(B)}{\partial(A)-b(A)+\partial(B)-b(B)}+1 < \phi(b),
\end{eqnarray*}
a contradiction.
\end{proof}

\begin{lemma}\label{connected2}
Let $G$ be graph, $b$ be an orientable balanced valuation on $G$, and let $S \sqsubset G$ be such that $\phi(S, b) = \phi(b)$.
Assume that $V(G)-S$ can be partitioned into two nonempty sets $A, B \sqsubset V(G)$ such that there is no edge between $A$ and $B$. Then $\phi(V(G)-A, b) = \phi(V(G)-B, b)= \phi(b)$.
\end{lemma}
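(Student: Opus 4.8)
The plan is to derive Lemma~\ref{connected2} as a corollary of Lemma~\ref{connected1}, using the negated valuation $-b$ together with Lemma~\ref{complement} as a dictionary between a set and its complement. I would first record that $-b$ is again an orientable balanced valuation of $G$: the parity condition is unchanged under negation, and for every $T\subseteq V(G)$ one has $-b(T)=b(V(G)-T)\le\partial(V(G)-T)=\partial(T)$, using $b(V(G))=0$ (which holds because $\partial(V(G))=0$), with strict inequality whenever $\partial(T)\neq 0$. This is exactly what is implicitly needed for Lemma~\ref{complement} to make sense, and in particular $\phi(-b)=\phi(b)$.

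Since $S\sqsubset G$ we have $\partial(V(G)-S)=\partial(S)\neq 0$, hence $V(G)-S\sqsubset G$, and Lemma~\ref{complement} gives $\phi(V(G)-S,-b)=\phi(S,b)=\phi(b)=\phi(-b)$. By hypothesis $V(G)-S$ is partitioned into the two nonempty sets $A,B\sqsubset G$ with no edge between $A$ and $B$. I would apply Lemma~\ref{connected1} with the graph $G$, the orientable balanced valuation $-b$, and the maximizing set $V(G)-S$ in the role of~$S$; its conclusion is $\phi(A,-b)=\phi(B,-b)=\phi(-b)$. Translating back via Lemma~\ref{complement}, $\phi(V(G)-A,b)=\phi(A,-b)=\phi(b)$ and $\phi(V(G)-B,b)=\phi(B,-b)=\phi(b)$, which is the assertion of the lemma.

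I do not anticipate a genuine obstacle: this statement is simply the complementary form of Lemma~\ref{connected1}, and the whole argument is bookkeeping with Lemma~\ref{complement}. The only points worth a line of verification are that $V(G)-S$, $A$, and $B$ all still lie in $\sqsubset G$ (immediate, since $\partial$ is invariant under complementation and $A,B$ are given to be in $\sqsubset G$), and that $-b$ really is an orientable balanced valuation so that $\phi(-b)$ is defined (checked above). Alternatively one could re-run the averaging identity from the proof of Lemma~\ref{connected1} directly on $V(G)-A$ and $V(G)-B$, but routing everything through $-b$ avoids repeating that calculation.
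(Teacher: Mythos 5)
Your proposal is correct and follows exactly the paper's route: the paper's one-line proof also applies Lemma~\ref{connected1} with $-b$ in place of $b$ and $V(G)-S$ in place of $S$, translating back via Lemma~\ref{complement}. Your additional verification that $-b$ is again an orientable balanced valuation is a detail the paper leaves implicit, but it is the same argument.
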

\begin{proof}
Follows from Lemma~\ref{connected1} by taking $-b$ as $b$, $V(G)-S$ as $S$.
\end{proof}

If a vertex $v$ has degree $3$ in $G$ (that is $v$ is not $h$), then either $b(v)=1$, we will call these vertices \emph{white}, or $b(v)=-1$, we will call these vertices \emph{black}.
This lemma is a minor modification on Remark 2.3. from \cite{goedgebeur}.
\begin{lemma}\label{lboundary}
Let $G$ be a graph, let $b$ be an orientable balanced valuation of $G$ and let $S\sqsubset G$  such that
$\phi(S, b) = \phi(b) > 3$. 
Each vertex of degree $3$ from $S$ that is neighbour with a vertex from $V(G)-S$ is white.
Each vertex of degree $3$ from $V(G)-S$ that is neighbour with a vertex from $S$ is black.
\end{lemma}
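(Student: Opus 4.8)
The plan is to prove only the first statement, that every degree-$3$ vertex of $S$ with a neighbour in $V(G)-S$ is white, and to obtain the second statement by symmetry: by Lemma~\ref{complement} we have $\phi(V(G)-S,-b)=\phi(S,b)=\phi(b)=\phi(-b)>3$, so applying the first statement to the set $V(G)-S$ and the orientable balanced valuation $-b$ shows that every degree-$3$ vertex of $V(G)-S$ adjacent to $S$ is white with respect to $-b$, that is, black with respect to $b$.

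For the first statement I argue by contradiction: suppose $v\in S$ has degree $3$, is black (so $b(v)=-1$), and has a neighbour in $V(G)-S$. The idea is that deleting $v$ from $S$ produces a set with strictly larger $\phi$-value, contradicting $\phi(S,b)=\phi(b)$. First note that if $S=\{v\}$ then $\phi(S,b)=(3+b(v))/(3-b(v))+1=3/2<3$, contradicting $\phi(b)>3$; hence $|S|\ge 2$ and $S':=S\setminus\{v\}$ is nonempty. Write $n=\partial(S)$ and $m=b(S)$. Orientability of $b$ applied to $S$ gives $n-m>0$, and $\phi(S,b)=\phi(b)>3$ forces $(n+m)/(n-m)>2$, hence $3m>n>0$ and in particular $m>0$. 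Let $p\ge 1$ be the number of edges joining $v$ to $V(G)-S$ and $q=3-p$ the number joining $v$ to $S'$; then $\partial(S')=n-p+q=n+3-2p$ and $b(S')=b(S)-b(v)=m+1$.

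If $q\ge 1$, then $\partial(S')\ge q\ge 1$, so $S'\sqsubset G$, and orientability of $b$ applied to $S'$ gives $\partial(S')-b(S')>0$. A direct computation then yields
\[
\phi(S',b)-1=\frac{n+m+4-2p}{n-m+2-2p}.
\]
For $p=1$ the numerator of this fraction is $n+m+2$ and the denominator is $n-m$, so it exceeds $(n+m)/(n-m)=\phi(S,b)-1$; for $p=2$ the numerator is the positive quantity $n+m$ and the denominator is the positive quantity $n-m-2<n-m$, so again it exceeds $(n+m)/(n-m)$. In both cases $\phi(S',b)>\phi(S,b)=\phi(b)$, contradicting $\phi(S',b)\le\phi(b)$. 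If instead $q=0$, i.e.\ $p=3$, then $\{v\}$ and $S'$ partition $S$ with no edge between them. If $\partial(S')=0$, the balanced-valuation axioms give $b(S')=0$, so $m=-1$ and $n=\partial(S)=3$, whence $\phi(S,b)=3/2<3$, a contradiction. If $\partial(S')\ne 0$, then $\{v\},S'\sqsubset V(G)$ and Lemma~\ref{connected1} gives $\phi(\{v\},b)=\phi(b)$, which is impossible since $\phi(\{v\},b)=(3+b(v))/(3-b(v))+1=3/2<3<\phi(b)$.

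The argument is a short case analysis together with elementary manipulation of two fractions, so I do not expect a real obstacle; the only points requiring care are the degenerate configurations ($S=\{v\}$, $S'=\emptyset$, $\partial(S')=0$) and checking the positivity assertions used both to compare the fractions and to apply orientability of $b$ to $S'$ — this is precisely where the inequality $m>0$ derived from $\phi(b)>3$ is used.
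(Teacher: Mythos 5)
Your proof is correct and follows essentially the same strategy as the paper: a one-vertex perturbation of $S$ (the paper adds a wrongly-coloured vertex of $V(G)-S$ to $S$, you remove one from $S$ — mirror images under the same complementation symmetry via Lemma~\ref{complement}), showing the ratio $(\partial+b)/(\partial-b)$ strictly increases. Your version is somewhat more careful than the paper's about the degenerate cases ($S=\{v\}$, $\partial(S')=0$, and the $p=3$ disconnection handled via Lemma~\ref{connected1}), but the underlying idea is identical.
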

\begin{proof}
Note that it suffices to prove one of the two statements as we can obtain the other by reversing $b$ and complementing $S$. Thus, assume for contradiction, that there is a white vertex of degree $3$ from $V(G)-S$ such that $k$ of its incident edges are incident with a vertex from $S$, $k>0$. But then 
\begin{eqnarray}
\phi(S \cup \{v\})= \frac{\partial(S)+3-2k+b(S)+1}{\partial(S)+3-2k+b(S)-1} +1 > \phi(b),
\end{eqnarray}
where the last inequality is trivial for $k=1$ and $k=2$ and requires $\phi(b) > 3$ for the case $k=3$. A contradiction.
\end{proof}

Much of our technique depends on perturbing $S$. This lemma is easy corollary of the definitions.
\begin{lemma}\label{lswap}
Let $G$ be a graph, let $b$ be an orientable balanced valuation of $G$, and let $S, T \sqsubset G$.
Suppose that $b(T)=0$. Moreover, among the edges with exactly one end in $T$ exactly half of them has the other end in $S$ (and the other half in $V(G)-S$). If $T\subseteq S$ then $\phi(S-T, b) = \phi(S, b)$.
If $T \subseteq V(G)-S$, then $\phi(S \cup T, b) = \phi(S, b)$.
\end{lemma}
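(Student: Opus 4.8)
The plan is a direct computation from the definitions, tracking how the two quantities $\partial(S)$ and $b(S)$ change when $T$ is removed from, or added to, $S$. Since $\phi(S,b)$ depends on $S$ only through $\partial(S)$ and $b(S)$, it suffices to show that both of these quantities are unchanged by the operation, and then to note that the resulting set is still in $\sqsubset G$ so that $\phi$ is defined on it.

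For the value $b$ this is immediate: if $T\subseteq S$ then $b(S-T)=b(S)-b(T)=b(S)$, and if $T\subseteq V(G)-S$ then $b(S\cup T)=b(S)+b(T)=b(S)$, using the hypothesis $b(T)=0$ in both cases. So the whole content lies in showing that $\partial$ is preserved.

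For $\partial$, I would classify the edges incident to $T$ according to where their ends lie. Edges with both ends in $T$ contribute $0$ to $\partial(S)$ and to $\partial(S\pm T)$, so they can be ignored. An edge with exactly one end in $T$ has its other end either in $S\setminus T$ or in $V(G)\setminus(S\cup T)$, the possibility ``other end in $T$'' being excluded. Concretely, if $T\subseteq S$: an edge from $T$ to $S\setminus T$ contributes $0$ to $\partial(S)$ but $1$ to $\partial(S-T)$, while an edge from $T$ to $V(G)\setminus S$ contributes $1$ to $\partial(S)$ but $0$ to $\partial(S-T)$; every other edge keeps its cut status. Hence $\partial(S-T)-\partial(S)$ equals the number of $T$--$(S\setminus T)$ edges minus the number of $T$--$(V(G)\setminus S)$ edges. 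By hypothesis exactly half of the edges with one end in $T$ have their other end in $S$ — equivalently in $S\setminus T$, since such an edge cannot have its other end in $T$ — so these two counts are equal and $\partial(S-T)=\partial(S)$. The case $T\subseteq V(G)-S$ is symmetric: the roles of $S\setminus T$ and $V(G)\setminus S$ are played by $S$ and $V(G)\setminus(S\cup T)$, and again $\partial(S\cup T)=\partial(S)$.

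Finally, since $\partial(S\pm T)=\partial(S)\neq 0$ (as $S\sqsubset G$), the set $S\pm T$ is in $\sqsubset G$, so $\phi(S\pm T,b)$ is defined and, the two defining quantities being unchanged, equals $\phi(S,b)$. There is no genuine obstacle here; the only point requiring care is the bookkeeping of which edges change cut status, and in particular the observation that an edge counted by the hypothesis as having ``its other end in $S$'' must actually have that end in $S\setminus T$.
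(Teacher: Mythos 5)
Your proof is correct and follows exactly the intended route: the paper gives no written proof (it calls the lemma an easy corollary of the definitions), and your argument — $b(S\pm T)=b(S)$ since $b(T)=0$, and $\partial(S\pm T)=\partial(S)$ because the $m$ cut edges gained from $T$'s edges into $S\setminus T$ (resp.\ into $V(G)\setminus(S\cup T)$) exactly cancel the $m$ cut edges lost, all other edges keeping their cut status — is precisely that easy corollary, carefully bookkept. The closing remark that $\partial(S\pm T)=\partial(S)\neq 0$, so the new set is again in $\sqsubset G$ and $\phi$ is defined on it, is a worthwhile detail the paper leaves implicit.
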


Finally, we generalise Theorem 2.4. from \cite{lukotka}.
\begin{lemma}\label{lflow}
Let $G$ be a graph, let $b$ be an orientable balanced valuation of $G$, and let $S \sqsubset G$.
If $4<\phi(S, b)<4+1/k$, then $\partial(S) \ge 4k+5$.
\end{lemma}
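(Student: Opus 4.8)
The plan is to analyze the quantity $\partial(S) - b(S)$, which is a positive integer (since $\phi(S,b)$ is finite, $b(S) < \partial(S)$), and to rewrite the hypothesis $\phi(S,b) < 4 + 1/k$ in terms of it. From the definition, $\phi(S,b) - 1 = (\partial(S)+b(S))/(\partial(S)-b(S))$, so writing $d = \partial(S)$ and $m = d - b(S) > 0$ we have $\phi(S,b) = 2d/m + 1$. Hence $4 < \phi(S,b) < 4+1/k$ becomes $3 < 2d/m < 3 + 1/k$, i.e. $3m < 2d < 3m + m/k$, equivalently $3km < 2dk < 3km + m$. The left inequality $3m < 2d$ together with the parity constraint (see below) will force $m$ to be reasonably large relative to $k$, and then $2d > 3m$ will push $d$ up to the claimed bound.

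The key parity observation is that $b(S) \equiv \partial(S) \pmod 2$: summing $b(v) \equiv \partial(v) \pmod 2$ over $v \in S$ gives $b(S) \equiv \sum_{v\in S}\partial(v) \pmod 2$, and $\sum_{v\in S}\partial(v) = 2e(S) + \partial(S) \equiv \partial(S) \pmod 2$, where $e(S)$ is the number of edges inside $S$. Therefore $m = \partial(S) - b(S)$ is even; write $m = 2\ell$ with $\ell \ge 1$. Now $3m < 2d$ reads $3\ell < d$, and $2d < 3m + m/k$ reads $d < 3\ell + \ell/k$. Since $d$ is an integer, $d \ge 3\ell + 1$ is only compatible with $d < 3\ell + \ell/k$ when $\ell/k > 1$, i.e. $\ell \ge k+1$; more carefully, from $d \ge 3\ell+1$ and $d < 3\ell + \ell/k$ we get $1 \le d - 3\ell < \ell/k$, so $\ell > k$, hence $\ell \ge k+1$, giving $d \ge 3\ell + 1 \ge 3(k+1) + 1 = 3k + 4$. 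This is close but one short of $4k+5$; to get the stronger bound I would iterate: $d - 3\ell$ is a positive integer, call it $t$, and $t < \ell/k$ forces $\ell > tk$, so $\ell \ge tk+1$ and $d = 3\ell + t \ge 3(tk+1) + t = (3k+1)t + 3$. Maximizing usefulness of this over the constraint that $t$ itself is bounded — note $t < \ell/k$ and $\ell \le$ (something from $d$) — requires being a little careful, but the point is that the system $d = 3\ell + t$, $tk < \ell$, $t \ge 1$, $\ell \ge 1$, all integers, has minimum value of $d$ exactly at the boundary, and I expect that minimum to be $4k+5$ (attained near $t=1$, $\ell = k+1$ does NOT quite work since then $d = 3k+4$; so the extremal case must instead be $t = k+1$-ish or the inequalities must be used jointly with the upper bound $2d < 3m + m/k$ to rule out $d = 3k+4$).

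The main obstacle, and the step I would spend the most care on, is closing the gap between the easy bound $3k+4$ and the target $4k+5$: the single inequality $3m < 2d$ is not enough, and one must genuinely use \emph{both} $2d/m > 3$ and $2d/m < 3 + 1/k$ simultaneously together with integrality of $d$ and evenness of $m$. Concretely, from $m = 2\ell$, $3\ell < d < 3\ell + \ell/k$, integrality of $d$ forces $d \in \{3\ell+1, \dots, 3\ell + \lceil \ell/k\rceil - 1\}$, so in particular $\lceil \ell/k \rceil \ge 2$, and writing $d - 3\ell = t$ with $1 \le t \le \lceil\ell/k\rceil - 1 < \ell/k$ gives $\ell > tk$ hence $\ell \ge tk+1$ and $d \ge (3k+1)t + 3$. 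For $t \ge 2$ this already yields $d \ge 6k+5 > 4k+5$, so the only dangerous case is $t = 1$, i.e. $d = 3\ell+1$ with $\ell \ge k+1$. I would then observe that $\ell = k+1$ gives $d = 3k+4$ but check whether the \emph{upper} inequality $2d < 3m + m/k = 6\ell + 2\ell/k$ is actually violated — with $\ell = k+1$, $d = 3k+4$: $2d = 6k+8$ and $6\ell + 2\ell/k = 6k+6 + 2(k+1)/k = 6k + 8 + 2/k > 6k+8$, so it is \emph{not} violated, meaning $d = 3k+4$ seems genuinely allowed by the numeric inequalities alone. This suggests the lemma as I have set it up needs an additional input beyond pure arithmetic — most likely that $\partial(S)$ and $b(S)$ are not independent because $S \sqsubset G$ in a cubic-ish graph forces further parity or that the $\ge$ versus $>$ in "$4 < \phi$" must be exploited — so I would re-examine whether the intended statement uses $4 \le \phi(S,b)$ or whether there is a hidden constraint (e.g. $b(v) \in \{1,-1\}$ at degree-$3$ vertices bounding $|b(S)|$ against the number of such vertices) that I should fold in; resolving exactly which extra fact pins $d$ to $4k+5$ rather than $3k+4$ is where the real work lies, and I would model the proof in \cite{lukotka}, Theorem~2.4, to identify it.
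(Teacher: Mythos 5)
There is a genuine gap, and it traces to a single algebraic slip near the start. With $d=\partial(S)$ and $m=d-b(S)$ you have $b(S)=d-m$, hence
\[
\phi(S,b)-1=\frac{d+b(S)}{d-b(S)}=\frac{2d-m}{m}=\frac{2d}{m}-1,
\]
so $\phi(S,b)=2d/m$, \emph{not} $2d/m+1$. Because of the spurious $+1$ you translate the hypothesis into $3<2d/m<3+1/k$ instead of the correct $4<2d/m<4+1/k$, and from then on everything is shifted: you land on the bound $d\ge 3k+4$, you correctly observe that your inequalities genuinely admit $d=3k+4$ (with $\ell=k+1$), and you conclude that some extra structural input must be missing. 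No extra input is needed — the "dangerous case" is an artifact of the algebra error. (Check: $d=3k+4$, $m=2k+2$ gives the true value $\phi=2d/m=3+1/(k+1)<4$, so it never satisfied the hypothesis in the first place.) With the corrected identity your own argument closes immediately: $m=2\ell$ even (your parity observation is right), $4\ell<d<4\ell+\ell/k$, integrality gives $1\le d-4\ell<\ell/k$, hence $\ell\ge k+1$ and $d\ge 4\ell+1\ge 4k+5$.

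For comparison, the paper's proof sets $p=\partial(S)+b(S)$ and $q=\partial(S)-b(S)$, notes \emph{both} are even, derives $3q<p<3q+q/k$, uses parity to get $p\ge 3q+2$, deduces $q\ge 2k+2$, and concludes $\partial(S)=(p+q)/2\ge 2q+1\ge 4k+5$. This is the same strategy as yours (parity plus integrality squeezed between the two strict inequalities); your $(d,m)$ parametrization is, if anything, slightly cleaner once the formula for $\phi(S,b)$ is corrected. As submitted, however, the proposal does not prove the lemma and explicitly leaves the decisive step unresolved.
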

\begin{proof}
Assume $p = \partial(S) + b(S)$ and $q = \partial(S) - b(S)$. Due to the definition of balanced valuation both $p$ and $q$ are even.
We have $\phi(S, b) = p/q+1$ and thus $3q<p<3q+q/k$. Due to parity of $p$ and $q$
the first inequality yields $p\ge 3q+2$. Then $p<3q+q/k$ implies $q>2k$ and, due to parity, $q\ge 2k+2$. Therefore,  $\partial(S)=(p+q)/2 \ge 2q+1 \ge 4k+5$.
\end{proof}

\section{The proof}

For each $i\in \mathbb{Z}_{2k+1}$ the subgraph of $H_{2k+1}$ induced by $\{a_i, b_i, c_i, d_i, e_i, \allowbreak f_i, g_i\}$ is called \emph{block $i$}, all such subgraphs are called collectively \emph{blocks}. If we add $\{b_{i-1}, g_{i-1}, a_{i+1}, f_{i+1}\}$ to the vertex-set, we will call the induced subgraph \emph{extended block} $i$.
The \emph{left neighbour} of block $i$ is block $i-1$ and the \emph{right neighbour} of block $i$ is block $i+1$. A \emph{neighbour} of a block is its left or right neighbour. Note that there is an automorphism of block $i$ that takes $a_i$ to $b_i$, we will call this operation \emph{reversing} block $i$. There is an automorphism that takes $a_i$ to $g_i$, we call this operation \emph{twisting} block $i$. 

\begin{claim}\label{c4}
$4 < \phi(H_{2k+1})$
\end{claim}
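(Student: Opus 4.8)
The plan is to show that $\phi(H_{2k+1})$ cannot equal $4$ by exhibiting a single set $S\sqsubset H_{2k+1}$ for which $\phi(S,b)>4$ holds for every orientable balanced valuation $b$ — or, equivalently, to argue that $\phi(H_{2k+1})=4$ would force a structure that $H_{2k+1}$ does not have. Recall from the introduction that for a cubic graph $\phi(G)=4$ exactly when $G$ is non-bipartite and $3$-edge-colourable, and $\phi(G)=3$ iff $G$ is bipartite; since $G_{2k+1}$ is a snark, $\phi(G_{2k+1})>4$. So the cleanest route is to reduce the claim about $H_{2k+1}$ to the known fact that $\phi(G_{2k+1})>4$: a circular nowhere-zero $4$-flow on $H_{2k+1}$ would pull back to one on $G_{2k+1}$. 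Concretely, $H_{2k+1}$ is obtained from $G_{2k+1}$ by contracting the cycle $h_0h_1\cdots h_{2k}$ to a single vertex $h$; a flow on $H_{2k+1}$ assigns values to all edges of $G_{2k+1}$ except the $h_ih_{i+1}$, and since each $h_i$ has exactly one non-contracted edge $d_ih_i$, the flow conservation at $h$ together with the tree (actually cycle) structure on the $h_i$ lets us extend the assignment to the $h_ih_{i+1}$ edges uniquely, staying in $[1,r-1]$ is the only thing to check.

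More carefully, I would phrase the argument via balanced valuations, since that is the language set up in Section~2. Suppose $\phi(H_{2k+1})=4$; then by Theorem~\ref{bv} there is an orientable balanced valuation $b$ on $H_{2k+1}$ with $\phi(S,b)\le 4$, i.e. $(\partial(S)+b(S))/(\partial(S)-b(S))\le 3$, for every $S\sqsubset H_{2k+1}$. I want to produce an orientable balanced valuation $b'$ on $G_{2k+1}$ with $\phi(b')\le 4$, contradicting $\phi(G_{2k+1})>4$. The natural candidate: keep $b'(v)=b(v)$ for every degree-$3$ vertex $v$ of $G_{2k+1}$ that already appears in $H_{2k+1}$, and choose $b'(h_i)\in\{-1,+1\}$ for the new vertices so that $\sum_i b'(h_i)=b(h)$ and the induced valuation remains orientable and satisfies \eqref{eqn} with $r=4$. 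The key point is that the cut structure of $G_{2k+1}$ decomposes relative to the $h$-cycle: for a set $S\subseteq V(G_{2k+1})$, splitting off its intersection with $\{h_i\}$ and using that the $h_i$ each have only one edge leaving the cycle, one reduces checking $\phi(S,b')\le 4$ to checking sets in $H_{2k+1}$ plus an elementary parity/counting estimate on the contracted cycle (this is exactly the kind of perturbation handled by Lemma~\ref{lswap} and the local estimate in Lemma~\ref{lboundary}).

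The main obstacle is the last step: showing that the values $b'(h_i)$ can be chosen consistently — both to sum to $b(h)$ and to keep $\phi(S,b')\le 4$ for all $S$ that genuinely "cross" the $h$-cycle. Here one uses that $\partial_{G_{2k+1}}(\{h_0,\dots,h_{2k}\})=2k+1$ is odd while an independent-set-of-signs argument on a cycle is flexible: given any target parity and any prescribed behaviour of the single pendant edges $d_ih_i$, we can pick signs on the cycle edges $h_ih_{i+1}$ compatibly because a cycle is Eulerian and its edge space is spanned in a way that lets local imbalances propagate around. If a direct balanced-valuation computation proves delicate, the fallback is the flow-theoretic argument sketched above: lift the $4$-flow edge by edge, and verify that each $h_ih_{i+1}$ receives a value in $[1,3]$ because partial sums of the $\pm 1$-type contributions along the cycle stay bounded — a short induction around $\mathbb{Z}_{2k+1}$. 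Either way the contradiction is with the snark property of $G_{2k+1}$, giving $4<\phi(H_{2k+1})$. I expect the write-up to be short, the only real content being the bookkeeping that contraction of a cycle of degree-$2$-into-the-rest vertices cannot decrease the circular flow number below $4$.
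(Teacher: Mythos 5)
Your reduction goes in the wrong direction. Contracting the $h$-cycle can only \emph{decrease} the circular flow number (a flow on $G_{2k+1}$ pushes forward to a flow on $H_{2k+1}$ by forgetting the cycle edges), which is why the paper proves the lower bound for $H_{2k+1}$ and then \emph{deduces} it for $G_{2k+1}$ — the statement $4<\phi(H_{2k+1})$ is strictly stronger than $4<\phi(G_{2k+1})$ and cannot be derived from it. Your lifting step is exactly where this fails: a nowhere-zero $4$-flow on $H_{2k+1}$ prescribes values on the pendant edges $d_ih_i$, and the extension to the cycle edges $h_ih_{i+1}$ is determined up to adding one constant circulation; there is in general no choice of that constant keeping all $2k+1$ cycle values in $\pm[1,3]$, since the partial sums of the pendant values (each of magnitude up to $3$) can spread over an interval wider than $6$ and can be forced into the forbidden gap $(-1,1)$. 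The assertion that these partial sums ``stay bounded'' is unjustified and false in general. A concrete counterexample to your guiding principle (``contracting a cycle of vertices each with one outgoing edge cannot drop the flow number to $4$ or below''): contracting the inner $5$-cycle of the Petersen graph yields the wheel $W_5$, whose flow number is at most $4$, while the Petersen graph has circular flow number $5$. The same obstruction defeats the balanced-valuation version: an orientable balanced valuation on $H_{2k+1}$ with $\phi(b)\le 4$ need not extend to one on $G_{2k+1}$.

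What the claim actually requires — and what the paper supplies — is a structural fact specific to $H_{2k+1}$: by Tutte's theorem, $\phi(H_{2k+1})\le 4$ would give a nowhere-zero $(\mathbb{Z}_2\times\mathbb{Z}_2)$-flow, whose restriction to $H_{2k+1}-h$ is a proper $3$-edge-colouring; but in any $3$-edge-colouring of an extended block, the colours of $b_{i-1}a_i$ and $g_{i-1}f_i$ agree if and only if the colours of $b_ia_{i+1}$ and $g_if_{i+1}$ disagree, so the odd number of blocks makes such a colouring impossible. This parity argument on the blocks is the real content of the proof, and it is absent from your proposal.
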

\begin{proof}
Consider a $3$-edge-colouring of an extended block $i$. One can easily check the following property: the colours of $b_{i-1}a_i$ and $g_{i-1}f_i$ are equal if and only if the colours of $b_ia_{i+1}$ and $g_if_{i+1}$ are not equal. As we have odd number of blocks this implies that $H_{2k+1}-h$ has no $3$-edge-colouring. 
Assume for contradiction that $\phi(H_{2k+1}) \le 4$.
Then there is a nowhere-zero $(\mathbb{Z}_2 \times \mathbb{Z}_2)$-flow on $H_{2k+1}$ \cite{tutte}. But if we restrict the flow to $H_{2k+1}-h$ it is a $3$-edge-colouring that uses the values from $\mathbb{Z}_2 \times \mathbb{Z}_2-\{(0,0)\}$ as colours, a contradiction.
\end{proof}

For the rest of the article we consider the reduced Goldberg graph $H_{2k+1}$, where $k>0$, and denote it as $G$. 
Assume for contradiction that $\phi(G) < 4+1/(k+1)$. Let $b$ be a balanced valuation of $G$ such that $\phi(b)=\phi(G)$ (such a valuation exists because of Theorem~\ref{bv}). Let $S \sqsubset G$ such that $\phi(b,S)=\phi(G)$.
Moreover, assume that $h \in S$ (Lemma~\ref{complement}) and that $S$ induces a connected graph (Lemma~\ref{connected1}). Among all such $S$ we choose the largest one. Due to Lemma~\ref{connected2} and maximality of $S$ we have the following.
\begin{claim}\label{connectedcomplement}
$V(G)-S$ induces a connected graph.
\end{claim}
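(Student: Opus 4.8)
The plan is to derive a contradiction from the assumption that $V(G)-S$ is disconnected, combining Lemma~\ref{connected2} with the maximality of $S$. The balanced valuation $b$ here is orientable, since $\phi(G)=\phi(b)$ is finite, so the earlier lemmas apply.

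Concretely, I would suppose that the graph induced by $V(G)-S$ has at least two connected components, pick one such component $B$, and set $A := V(G)-S-B$ (the union of the remaining components); then $A$ and $B$ are both nonempty and no edge of $G$ joins $A$ to $B$. The first thing to check is that $A\sqsubset G$ and $B\sqsubset G$: since $G=H_{2k+1}$ is connected, every nonempty proper vertex subset has positive boundary, so $\partial(A)>0$ and $\partial(B)>0$. Thus Lemma~\ref{connected2}, applied to the partition $V(G)-S = A\cup B$, yields $\phi(V(G)-A,b)=\phi(b)$.

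Next I would observe that $V(G)-A = S\cup B$ meets every requirement that went into the choice of $S$: it contains $h$ because $h\in S$; it satisfies $\partial(S\cup B)=\partial(A)>0$, so $S\cup B\sqsubset G$; and it induces a connected graph, because $S$ induces a connected graph, $B$ induces a connected graph, and --- $B$ being a connected component of $V(G)-S$ inside the connected graph $G$ --- at least one edge of $G$ joins $B$ to $S$ (otherwise $\partial(B)=0$). Finally $S\cup B\supsetneq S$ since $B\neq\emptyset$, which contradicts the choice of $S$ as a largest connected subset containing $h$ with $\phi(\cdot,b)=\phi(b)$. Hence $V(G)-S$ induces a connected graph.

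The argument is essentially bookkeeping, so I do not expect a real obstacle; the only points needing care are verifying that both pieces of the partition of $V(G)-S$ have nonzero boundary (so that Lemma~\ref{connected2} may be invoked) and that reattaching the component $B$ to $S$ preserves connectivity --- both of which are immediate consequences of the connectivity of $G=H_{2k+1}$.
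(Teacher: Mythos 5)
Your proof is correct and is exactly the argument the paper intends: the paper derives this claim in one line from Lemma~\ref{connected2} together with the maximality of $S$, and your write-up simply fills in the (routine but worthwhile) checks that both pieces of the partition have positive boundary and that $S\cup B$ is connected and contains $h$, so that it competes with $S$ in the maximality argument.
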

\noindent This together with the fact $h \not \in S$ greatly reduces the possibilities for the structure of the vertices in $G-S$.

\medskip

A \emph{left boundary} is a block such that its extended block satisfies
\begin{itemize}
\item $a_{i+1} \not\in S$, but $b_i, f_{i+1}\in S$, or
\item $f_{i+1} \not\in S$, but $a_{i+1}, g_i\in S$, or
\item $a_{i+1}, f_{i+1} \not\in S$, but $b_i, g_i\in S$, or
\item $b_i$ or $g_i$ is not in $S$ and there is no path containing only vertices from $V(G)-S$ between one of $b_i$ or $g_i$ to one of $a_i$, $f_i$.
\end{itemize}
\noindent A \emph{right boundary} is defined similarly using the symmetry that reverses the block. Informally speaking, a block is a left or a right boundary if the graph induced by $V(G)-S$ ends at that block. Simple case analysis based on the definition of boundary, the connectivity of graph induced by $V(G)-S$, and the fact that $h\in S$ gives the following claim.
\begin{claim}\label{boundary}
There is at most one left boundary and one right boundary in $G$.
\end{claim}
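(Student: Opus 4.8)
The plan is to argue that if there were two left boundaries (the right-boundary case follows by the block-reversing symmetry), then the set $V(G)-S$ would be forced to be disconnected, contradicting Claim~\ref{connectedcomplement}. First I would unpack the definition of a left boundary carefully: in each of the four listed cases, the key feature is that within extended block $i$, the vertices of $V(G)-S$ that could ``escape to the left'' (namely $a_{i+1}$ and $f_{i+1}$, which connect to block $i+1$, versus $b_i$ and $g_i$, which connect to block $i-1$) are separated — there is no path inside $V(G)-S$ joining the left-escape vertices to the right-escape vertices through block $i$. So a left boundary is precisely a place where the ``$V(G)-S$ part'' cannot continue past block $i$ toward increasing indices through that block.

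Next I would use the cyclic structure. Suppose blocks $i$ and $j$ ($i \ne j$ in $\mathbb{Z}_{2k+1}$) are both left boundaries. The cycle of blocks is split by $i$ and $j$ into two arcs. I would show that any vertex of $V(G)-S$ lying in the interior of block $i$ or to the ``left'' of it (in one of the two arcs) cannot reach, via a path in $V(G)-S$, a vertex lying strictly on the other side — because to do so the path would have to cross one of the boundary blocks, and the definition of left boundary (together with the only remaining connections, the edges $b_\ell a_{\ell+1}$, $g_\ell f_{\ell+1}$, $a_\ell b_{\ell-1}$, $g_\ell f_{\ell}$ at block joints, and the fact that $h \in S$ so no path may route through $h$) forbids exactly that crossing. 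Hence $V(G)-S$ breaks into at least two components (one can check at least one vertex of $V(G)-S$ lies strictly on each side, since $S \ne V(G)$ and the boundary conditions themselves exhibit vertices of $V(G)-S$), contradicting Claim~\ref{connectedcomplement}. If there are no left-boundary blocks at all, the claim holds vacuously; so at most one left boundary exists, and symmetrically at most one right boundary.

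The main obstacle is the bookkeeping: the definition of left boundary has four cases, and the fourth (path-based) case already encodes a separation statement, while the first three are ``local'' configurations in the extended block that I must translate into the same separation statement. I would handle this by observing that in all four cases, deleting block $i$'s contribution leaves the left-escape vertices $\{a_{i+1}, f_{i+1}\} \cap (V(G)-S)$ in a different component of $(V(G)-S)$ than the right-escape vertices $\{b_i, g_i\}\cap (V(G)-S)$ — case one removes $a_{i+1}$, case two removes $f_{i+1}$, case three removes both, and case four asserts the separation directly. The remaining subtlety is ruling out a path that leaves block $i$ to the right, wanders around the rest of the graph, and re-enters from block $i-1$: this is where connectivity of $V(G)-S$ combined with $h \in S$ and the existence of a \emph{second} boundary block is used — that second boundary similarly blocks the return route — so the two arcs of the block-cycle are genuinely separated in $V(G)-S$. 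Once this separation is established, the contradiction with Claim~\ref{connectedcomplement} is immediate.
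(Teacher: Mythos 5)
Your approach---showing that two left boundaries would force $V(G)-S$ to be disconnected, contradicting Claim~\ref{connectedcomplement}, with $h\in S$ ruling out paths routed through $h$---is exactly the argument the paper invokes (the paper gives no explicit proof, only citing ``simple case analysis based on the definition of boundary, the connectivity of the graph induced by $V(G)-S$, and the fact that $h\in S$''). One directional slip to correct in your bookkeeping: $b_i,g_i$ are the vertices of block $i$ that attach to block $i+1$ (via the edges $b_ia_{i+1}$, $g_if_{i+1}$), while $a_i,f_i$ attach to block $i-1$; hence the separation asserted by the fourth case of the definition is between $\{b_i,g_i\}$ and $\{a_i,f_i\}$ (across the interior of block $i$), and in the first three cases it is the two junction edges $b_ia_{i+1}$, $g_if_{i+1}$ that are absent from $G-S$---this does not change the structure of your argument, but your unified ``separation statement'' should be phrased accordingly.
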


Let us start by examining $b$. Assume that there is a set $X$ containing three white (black) vertices forming a path. Then $\phi(X, b)=5$ ($\phi(V(G)-X, b)=5$), a contradiction with $\phi(b)=\phi(G)<4.5$. We can analyse other configurations $X$ that contradict $\phi(X, b) < 4.5$.
\begin{claim}\label{path}
There is no path on three vertices containing vertices of the same colour. 
\end{claim}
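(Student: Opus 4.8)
The plan is to exploit the same averaging idea used in Claim~\ref{path}'s opening paragraph: if a short configuration $X$ of same-coloured vertices has $\phi(X,b)\ge 4.5$, then either $X$ or $V(G)-X$ contradicts $\phi(b)=\phi(G)<4+1/(k+1)\le 4.5$. So the whole proof is a finite case analysis: enumerate the ways three vertices $u,v,w$ with $b(u)=b(v)=b(w)$ can lie on a path in $G$, and in each case produce a witness set $X$ (or its complement) with $\phi(X,b)\ge 4.5$. By Lemma~\ref{complement} it suffices to treat the white case $b(u)=b(v)=b(w)=1$; the black case follows by replacing $b$ with $-b$.

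The key computation is the following. For a set $X$ with $\partial(X)=d$ and $b(X)=s$ we have $\phi(X,b)=(d+s)/(d-s)+1$, and $\phi(X,b)\ge 4.5$ is equivalent to $7(d-s)\le 2(d+s)$, i.e. $9s\ge 5d$ — loosely, $b(X)$ must be a large fraction of $\partial(X)$. The cleanest witnesses are: (i) $X=\{u,v,w\}$ itself, with $b(X)=3$; the path structure forces $\partial(X)\le 9-2\cdot 2=5$ if the path is induced (two internal edges), and if $u,v,w$ additionally share no chord we even get $\partial(X)=5$, giving $\phi(X,b)=5$ exactly — this is the already-noted sub-case. (ii) When $uvw$ is not induced (a triangle), $\partial(X)=3$ and $b(X)=3=\partial(X)$, which already violates the balanced-valuation inequality $b(S)\le\partial(S)$ being strict for orientable valuations, contradiction outright. (iii) When the three white vertices form a path of length two with one further edge joining two of them to a common fourth vertex of degree $3$, I would either absorb that vertex into $X$ (Lemma~\ref{lboundary}-style local move) or use Lemma~\ref{lswap} to trim, again landing on a set with $9s\ge 5d$. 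So the real content is: list all isomorphism types of ``three same-coloured vertices on a path'' inside the fixed cubic-ish graph $G$ (recall only $h$ has degree $\neq 3$), and for each, check the inequality.

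Concretely I would organise the case analysis by whether the path avoids $h$ or passes through $h$. If it avoids $h$, all three vertices have degree $3$, and the number of edges leaving $X=\{u,v,w\}$ is $9-2\cdot(\#\text{internal edges})$; a path on three vertices has at least two internal edges, so $\partial(X)\le 5$, and $\partial(X)\in\{3,5\}$ by parity. Both $\partial(X)=3$ (triangle, contradicts strictness of $b(S)<\partial(S)$) and $\partial(X)=5$ ($\phi(X,b)=5>4.5$) are immediate contradictions. If the path passes through $h$, then $h$ is one of the three vertices with $b(h)$ odd; but $b(h)\equiv\partial(h)=2k+1\pmod 2$, and for $\phi(b)<4.5$ one shows (a short extremal-set argument, or directly from $\phi(X,b)<4.5$ applied to small $X$ around $h$) that $|b(h)|$ is bounded well below $2k+1$, in particular $b(h)\ne\pm 1$ is possible, so ``three vertices of the same colour'' — where colour was defined only for degree-$3$ vertices — cannot include $h$ at all once $k>0$ makes $b(h)$ not equal to $1$. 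Thus the path-through-$h$ case is either vacuous or reduces to the degree-$3$ case on the two non-$h$ vertices plus a neighbour, handled as above.

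The main obstacle I anticipate is not any single inequality — each is a one-line check of $9b(X)\ge 5\partial(X)$ — but making the enumeration of path-types genuinely exhaustive and short. The graph $G$ has a rigid local structure (blocks $a_i,b_i,c_i,d_i,e_i,f_i,g_i$ with prescribed edges), so I would phrase the enumeration in terms of: which block(s) the three vertices lie in, and whether consecutive path-edges are the block's internal edges or the inter-block edges $b_ia_{i+1}$, $g_if_{i+1}$, $d_ih$. Because every length-two path either stays inside one extended block or crosses exactly one block boundary, the number of cases is small and each is settled by the same $\partial(X)\in\{3,5\}$ parity dichotomy together with the strictness $b(S)<\partial(S)$ of orientable balanced valuations. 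A secondary subtlety is correctly invoking Lemma~\ref{lboundary} when $X$'s neighbourhood contains a forced-colour vertex, to rule out the $\partial(X)=3$ non-triangle possibility; but since $\partial(X)=3$ with $b(X)=3$ already equals the forbidden equality case $b(S)=\partial(S)$, even that is immediate and no extra machinery is needed.
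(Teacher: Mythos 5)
Your proposal is correct and is essentially the paper's own argument: take $X$ to be the three same-coloured degree-$3$ vertices of the path, note $\partial(X)=5$ and $|b(X)|=3$ so that $\phi(X,b)=5$ (or $\phi(V(G)-X,-b)=5$ in the black case via Lemma~\ref{complement}), contradicting $\phi(b)<4.5$. The extra cases you enumerate (triangle, path through $h$) are vacuous in $G$ but are dispatched correctly, so they do no harm.
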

\begin{claim}\label{tree}
There is no tree on seven vertices containing only one vertex of some colour. 
\end{claim}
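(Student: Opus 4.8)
The plan is to mimic the proof of Claim~\ref{path}: exhibit, for any tree $T$ on seven vertices all but one of which are white (the black case follows by replacing $b$ with $-b$ and $T$ with $V(G)-T$ via Lemma~\ref{complement}), a set $X$ with $\phi(X, b) > 4.5 \ge \phi(b)$. First I would observe that all seven vertices of $T$ have degree $3$ in $G$, so the white/black dichotomy applies; if $T$ contained the vertex $h$ the valuation constraints would be different, but since $h \in S$ and we may also assume $T$ is chosen to avoid $h$ (or handle that degenerate case separately). The natural candidate is simply $X = V(T)$ itself. Then $b(X) = 6 - 1 = 5$ (six white vertices contributing $+1$, one black contributing $-1$), and since $T$ is a tree with six internal edges, $\partial(X) = 3\cdot 7 - 2\cdot 6 = 9$. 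This gives
\[
\phi(X, b) = \frac{\partial(X) + b(X)}{\partial(X) - b(X)} + 1 = \frac{9 + 5}{9 - 5} + 1 = \frac{14}{4} + 1 = 4.5,
\]
which is not quite a contradiction — it only matches $\phi(b) < 4.5$ with equality at the boundary. So the crude choice $X = V(T)$ is not enough.

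The fix is to enlarge $X$ slightly using the neighbours of the leaves of $T$, or equivalently to use Lemma~\ref{lflow}: since $4 < \phi(G) = \phi(b) < 4 + 1/(k+1)$, Lemma~\ref{lflow} (with $k$ replaced by $k+1$) tells us that any set $Y \sqsubset G$ with $\phi(Y,b) = \phi(b)$ has $\partial(Y) \ge 4k+9$. The key step is therefore: suppose $T$ is a tree on seven vertices with exactly one black vertex (say), and add to $V(T)$ any single neighbour $w \notin V(T)$ of a leaf of $T$. If $w$ is white, then adding $w$ changes $b$ by $+1$ and changes $\partial$ by at most $+1$ (the edge from $w$ to the leaf leaves the boundary, any other edges at $w$ enter it), so $\partial(X \cup \{w\}) \le 9 + 1 = 10$ while $b(X \cup \{w\}) = 6$, giving $\phi \ge (10+6)/(10-6) + 1 = 5$ — actually I should be careful: adding a white vertex $w$ with $j$ edges to $X$ changes the quantity to $(\partial - 2j + 3 + b + 1)/(\partial - 2j + 3 + b - 1) + 1$, and for $j = 1$ this is $(9 - 2 + 3 + 5 + 1)/(9 - 2 + 3 + 5 - 1)+1 = 16/12 + 1 = 7/3 + 1$, which is smaller. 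So I would instead add a black neighbour: if some leaf of $T$ has a black neighbour $w$, then $X = V(T) \cup \{w\}$ has $b(X) = 5 - 1 = 4$ and $\partial(X) = 9 - 1 + 1 = 9$ (the leaf–$w$ edge disappears, the other two edges at $w$ appear — wait, that gives $9$), so $\phi(X,b) = 13/5 + 1 < 4.5$, which goes the wrong way. The correct move: remove a white leaf instead. If $\ell$ is a white leaf of $T$, then $X = V(T) \setminus \{\ell\}$ has $b = 4$ and $\partial = 9 + 1 - 2 = 8$, giving $\phi = 12/4 + 1 = 4$; also wrong.

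So the genuinely right approach is: compare $X = V(T)$ against $\phi(b)$ and use Claim~\ref{path} together with Lemma~\ref{lboundary}. Since $\phi(V(T), b) = 4.5 > 4$ would already contradict $\phi(b) < 4.5$ unless $\phi(V(T), b) < \phi(b)$ strictly. But $\phi(V(T),b) = 4.5$ and $\phi(b) < 4.5$ means $\phi(V(T),b) > \phi(b)$, which is the desired contradiction with the definition $\phi(b) = \max_{Y \sqsubset G}\phi(Y,b)$ — the computation $\phi(V(T),b) = 4.5$ already does it, because $4.5 > \phi(b)$ by Claim~\ref{c4}'s strengthening (we assumed $\phi(G) < 4 + 1/(k+1) \le 4.5$). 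Thus the proof is: $V(T) \sqsubset G$ (it cannot have $\partial = 0$ since $G$ is connected and $T \ne V(G)$), $\partial(V(T)) = 21 - 12 = 9$, $b(V(T)) = \pm 5$, so $\phi(V(T), b) = 14/4 + 1 = 4.5 > \phi(b)$, contradiction. The main obstacle I anticipate is the edge-count bookkeeping: one must confirm that every tree on seven vertices, regardless of shape (path, star, caterpillar, spider), has exactly six edges hence $\partial = 9$, and that none of the seven vertices is $h$ — the latter because $h$ has degree $2k+1 \ne 3$, so a seven-vertex tree of degree-$3$ vertices automatically excludes $h$, and one must also check $V(T) \ne V(G)$, which holds since $|V(G)| = 7(2k+1) > 7$ for $k \ge 1$.

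Let me restate this cleanly:

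\begin{proof}[Proof of Claim~\ref{tree}]
By Lemma~\ref{complement}, replacing $b$ by $-b$, it suffices to rule out a tree $T$ on seven vertices in which exactly one vertex is black and the other six are white. Each vertex of $T$ has degree $3$ in $G$ (the vertex $h$ has degree $2k+1 \ge 3$; if $2k+1 = 3$ then $k=1$, but in any case $h$ is not coloured white or black, so $h \notin V(T)$). Put $X = V(T)$. Since $G$ is connected and $|V(G)| = 7(2k+1) > 7 = |X|$, we have $\partial(X) > 0$, so $X \sqsubset G$. As $T$ is a tree it has exactly six edges, all internal to $X$, hence $\partial(X) = 3 \cdot 7 - 2 \cdot 6 = 9$, and $b(X) = 6 \cdot 1 + 1 \cdot (-1) = 5$. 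Therefore
\[
\phi(X, b) = \frac{\partial(X) + b(X)}{\partial(X) - b(X)} + 1 = \frac{9 + 5}{9 - 5} + 1 = \frac{14}{4} + 1 = 4.5.
\]
But $\phi(b) = \phi(G) < 4 + 1/(k+1) \le 4.5$ for $k \ge 1$, so $\phi(X, b) > \phi(b)$, contradicting $\phi(b) = \max_{Y \sqsubset G} \phi(Y, b)$.
\end{proof}
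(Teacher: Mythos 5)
Your final clean proof is correct and is exactly the computation the paper has in mind: the paper leaves Claim~\ref{tree} as one of the configurations ``that contradict $\phi(X,b)<4.5$,'' and the intended verification is precisely $\partial(V(T))=9$, $b(V(T))=\pm 5$, hence $\phi(V(T),b)=4.5\ge 4+1/(k+1)>\phi(b)$ (after possibly replacing $b$ by $-b$). The exploratory false starts in your write-up are harmless since the final restated proof stands on its own.
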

\begin{claim}\label{circuits}
There are no two $5$-circuits intersecting in three vertices containing only two vertices of some colour.
\end{claim}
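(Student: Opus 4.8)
The claim to prove is Claim~\ref{circuits}: there are no two $5$-circuits in $G$ intersecting in three vertices such that the union contains only two vertices of some fixed colour. The strategy mirrors the arguments used for Claims~\ref{path} and~\ref{tree}: exhibit a vertex set $X$ (or its complement) whose $\phi(X,b)$ value would be forced to be at least $4.5$, contradicting $\phi(b)=\phi(G)<4+1/(k+1)\le 4.5$. So the whole proof is a boundary/parity computation once we identify the right set $X$.

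First I would set up the combinatorics of the configuration. Two $5$-circuits $C_1, C_2$ sharing exactly three vertices share a common path $P$ on those three vertices (two shared edges), so $X = V(C_1)\cup V(C_2)$ has $|X| = 5+5-3 = 7$ vertices, and the induced subgraph contains $C_1\cup C_2$ which has $5+5-2 = 8$ edges; the seven vertices have degrees summing to $16$ inside this subgraph, with the two ``far'' vertices of $P$... — more carefully, the two endpoints of the shared path have degree $2$ in each circuit but the shared middle vertex has degree $2$ total, so inside $C_1\cup C_2$ the degree sequence is $(4,4,2,2,2,2,2)$ at the two path-endpoints and elsewhere. I would then compute $\partial(X)$: each vertex $v\in X$ of degree $3$ in $G$ contributes $3 - \deg_{C_1\cup C_2}(v)$ to the count of edges leaving $X$ through $v$ (assuming $h\notin X$, which holds since $h$ has degree $2k+1 \ge 3$ and the circuits live inside blocks). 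This gives $\partial(X)$ explicitly — I expect $\partial(X) = 3\cdot 7 - 2\cdot 8 = 5$ if all seven vertices have degree $3$, but one must check whether $h$ can lie in $X$; in $G = H_{2k+1}$ the relevant short circuits are within one or two blocks and avoid $h$, so $\partial(X)=5$.

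Next comes the parity and valuation estimate. With $b(X)$ equal to (number of white vertices) minus (number of black vertices) in $X$, and $|X|=7$, the hypothesis ``only two vertices of some colour'' means $b(X) = \pm(7-2\cdot 2) = \pm 3$; taking the colour with two vertices to be black (else replace $b$ by $-b$ and $X$ by its complement via Lemma~\ref{complement}), we get $b(X)=5-2=3$. Then $\phi(X,b) = (\partial(X)+b(X))/(\partial(X)-b(X)) + 1 = (5+3)/(5-3)+1 = 4+1 = 5$... but I should double-check $\partial(X)$; if instead $\partial(X)=5$ gives $(5+3)/(5-3)+1 = 5 > 4.5$, done. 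The delicate point, and the place I expect the main obstacle, is verifying that $\partial(X)$ is small enough — specifically, ruling out that the two high-degree path-endpoints have extra edges inside $X$ that I've miscounted, and confirming the subgraph $C_1\cup C_2$ really has exactly $8$ edges (no chords beyond the shared path). If $\partial(S)-b(S)$ could be as small as $2$ while $\partial(S)+b(S)$ is at least, say, $7$ or more, we even get a stronger contradiction; the argument only needs $\phi(X,b)\ge 4.5$, i.e. $\partial(X)+b(X) \ge 3(\partial(X)-b(X))$, which reduces to $b(X)\ge \partial(X)/2$, and with $b(X)=3$ this needs $\partial(X)\le 6$. So the crux is the bound $\partial(X)\le 6$, which follows from $\partial(X) = \sum_{v\in X}(3-\deg_{C_1\cup C_2}(v)) \le 21 - 16 = 5$ provided every vertex of $X$ has degree $3$ in $G$ (i.e. $h\notin X$). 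I would finish by invoking Theorem~\ref{bv}/the definition of $\phi(b)$ to get the contradiction $\phi(b)\ge\phi(X,b)\ge 4.5 > 4+1/(k+1)$ since $k\ge 1$.
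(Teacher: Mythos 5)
Your computation is correct and is exactly the argument the paper has in mind (the paper states this claim without proof, justifying it only by the preceding remark that one "analyses configurations $X$ contradicting $\phi(X,b)<4.5$"): with $h\notin X$ (no $5$-circuit of $H_{2k+1}$ passes through $h$), one gets $\partial(X)=21-2|E(G[X])|\le 21-16=5$ and $|b(X)|=3$, whence after possibly passing to $-b$ and the complement either $b(X)=\partial(X)$ contradicts orientability or $\phi(X,b)=8/2+1=5>4.5$. Your worry about the shared vertices not forming a path or about extra chords is harmless in the favourable direction: any additional edges inside $X$ only decrease $\partial(X)$ and strengthen the contradiction.
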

\noindent An easy case analysis on the blocks of $G$ taking into account Claim~\ref{path} and Claim~\ref{circuits} yields the following. \emph{Colour switching} is an operation that replaces $b$ with $-b$.
\begin{claim}
The colours of vertices in a block match, up to colour switching and reversing, one of the configurations on Figure~\ref{confs}.
\end{claim}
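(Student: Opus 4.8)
The plan is to enumerate, for a single block, all possible colourings of its seven vertices $a_i, b_i, c_i, d_i, e_i, f_i, g_i$ that survive the local constraints established in Claims~\ref{path}, \ref{tree}, and \ref{circuits}, and then check that every survivor coincides, after colour switching and/or reversing the block, with one of the configurations depicted in Figure~\ref{confs}. First I would note that a block contains $7$ vertices, so a priori there are $2^7 = 128$ colourings; the symmetries of colour switching and block reversal cut this down by (roughly) a factor of four, leaving on the order of $32$ essentially distinct cases, and the claims below will eliminate almost all of them.

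The main tool is Claim~\ref{path}: inside a block, the paths on three vertices are $a_i c_i$-free but include, e.g., $a_i\text{--}e_i\text{--}d_i$ (via $a_ie_i, e_id_i$), $b_i\text{--}c_i\text{--}d_i$, $c_i\text{--}d_i\text{--}e_i$, $d_i\text{--}e_i\text{--}g_i$, $c_i\text{--}f_i\text{--}g_i$, $e_i\text{--}g_i\text{--}f_i$, and $a_i\text{--}b_i\text{--}c_i$ (the last using the in-block edges $a_ib_i$, $b_ic_i$). No such monochromatic $P_3$ may occur, which already forces the colouring to alternate along each of these short paths and severely restricts the middle vertices $c_i, d_i, e_i, g_i$. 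In particular one checks quickly that $c_i, d_i, e_i$ cannot all share a colour, nor can $d_i, e_i, g_i$, etc. Next I would bring in Claim~\ref{tree}: the block together with the appropriate in-block edges contains several $7$-vertex trees (for instance the tree spanning $a_i, b_i, c_i, d_i, e_i, f_i, g_i$ obtained by deleting one suitable edge of the block), so no colouring may have exactly one vertex of some colour — this kills the near-monochromatic cases that slipped past Claim~\ref{path}. Finally Claim~\ref{circuits} handles the remaining borderline cases: the block contains two $5$-circuits sharing three vertices (namely $a_i b_i c_i d_i e_i$ and $c_i d_i e_i g_i f_i$, which share $c_i, d_i, e_i$), and no colouring may restrict to only two vertices of a single colour on their union, removing the last few stragglers.

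After this sieve one is left with a short explicit list of colour patterns on the seven block-vertices; I would tabulate them and exhibit for each the colour-switch and/or block-reversal that maps it onto one of the pictures in Figure~\ref{confs}, completing the case analysis. The main obstacle I anticipate is purely bookkeeping: making sure the enumeration of block colourings is exhaustive and that the reductions by colour switching and reversing are applied consistently, so that no surviving configuration is overlooked and none is double-counted. There is no deep idea here beyond the three structural claims already proved; the work is in organizing the finitely many cases cleanly, ideally by first fixing the colours of the "spine" vertices $c_i, d_i, e_i$ (which Claim~\ref{path} already constrains heavily) and then extending to $a_i, b_i, f_i, g_i$.
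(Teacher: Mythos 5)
Your plan is exactly the paper's argument: the paper disposes of this claim with the one-line remark that an easy case analysis of the block colourings using Claim~\ref{path} and Claim~\ref{circuits} yields the list in Figure~\ref{confs}, and your enumeration of the in-block $P_3$'s and of the two $5$-circuits $a_ib_ic_id_ie_i$ and $c_id_ie_ig_if_i$ sharing $c_i,d_i,e_i$ identifies precisely the substructures that drive that analysis (your additional appeal to Claim~\ref{tree} is harmless but redundant, since a $6$--$1$ colour split already forces a monochromatic $P_3$). So the proposal is correct and follows essentially the same route, just spelling out the finite check the paper leaves implicit.
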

\begin{figure}
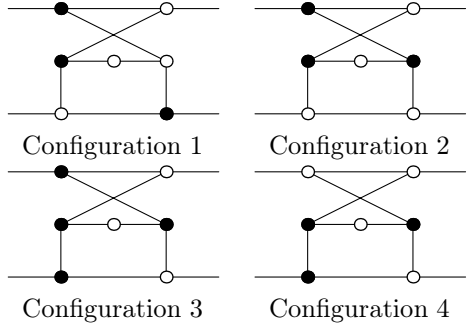

\begin{center}
\begin{tabular}{cc}
\includegraphics{goldberg1.mps} & \includegraphics{goldberg2.mps} \\
Configuration 1 & Configuration 2 \\
\includegraphics{goldberg3.mps} & \includegraphics{goldberg4.mps} \\
Configuration 3 & Configuration 4
\end{tabular}
\end{center}
\caption{Possible colouring of the blocks, up to colour switching and reverse}\label{confs}
\end{figure}
\noindent Note that Configuration 4 can be obtained from Configuration 2 by twisting and reversing the block.

Due to Claim~\ref{c4}, the assumption $\phi(G)<4+1/(k+1)$, and Lemma~\ref{lflow}.
\begin{claim}\label{largeboundary}
$\partial(S) \ge 4k+9$.
\end{claim}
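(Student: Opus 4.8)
The plan is simply to feed the set $S$ fixed above into Lemma~\ref{lflow}. Recall that $S$ was chosen so that $\phi(S, b) = \phi(b) = \phi(G)$. By Claim~\ref{c4} we have $\phi(G) > 4$, and by the standing assumption-for-contradiction we have $\phi(G) < 4 + 1/(k+1)$; since $k > 0$, the integer $k+1$ is positive, so Lemma~\ref{lflow} applies with its parameter taken to be $k+1$. Its conclusion reads $\partial(S) \ge 4(k+1) + 5 = 4k + 9$, which is exactly the claim.

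The only points to check are that the two inequalities bounding $\phi(S, b)$ are both strict and in the right form: the lower bound $\phi(S, b) > 4$ is Claim~\ref{c4} (applied to the maximiser $S$, using $\phi(S, b) = \phi(b)$), and the upper bound $\phi(S, b) < 4 + 1/(k+1)$ is the contradiction hypothesis verbatim. All the arithmetic content — that $\partial(S) + b(S)$ and $\partial(S) - b(S)$ are even, which forces $\partial(S) - b(S) \ge 2(k+1) + 2$ and hence $\partial(S) \ge 2\bigl(\partial(S) - b(S)\bigr) + 1 \ge 4k+9$ — is already internal to Lemma~\ref{lflow}, so there is no real obstacle here. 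This claim is essentially a bookkeeping step that upgrades the generic bound $\partial(S) \ge 4k+5$, which would follow from $\phi(G) < 4 + 1/k$ alone, to the sharper $\partial(S) \ge 4k+9$ that the later counting arguments over the $2k+1$ blocks will need.
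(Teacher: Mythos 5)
Your proposal is correct and is exactly the paper's argument: the paper derives Claim~\ref{largeboundary} in one line from Claim~\ref{c4}, the contradiction hypothesis $\phi(G)<4+1/(k+1)$, and Lemma~\ref{lflow} applied with parameter $k+1$. Your recap of the internal arithmetic of Lemma~\ref{lflow} is also accurate, so nothing is missing.
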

\noindent We will use this claim for contradiction in the final argument, but we will also use this claim in the situations when
Claim~\ref{connectedcomplement} or the fact that $S$ induce connected graph imply that $S$ or $V(G)-S$ is small.

\bigskip

Next we analyse all possible pairs $(b, S)$ reduced to a single block $i$. 
We will call such a pair $X$ the \emph{type of the block} and we say that \emph{block $i$ is $X$}. Note that the symmetry reversing the block creates involution between block types. The type obtained by applying this involution to $X$ is the \emph{reverse} of $X$, we denote it $X^T$. We say that block $i$ is of \emph{basic type} $X$ if the type of it is $X$ or $X^T$.

We examine possible types of blocks in $G$.
In the analysis we use Lemma~\ref{lboundary}, the fact that both $S$ and $V(G)-S$ induce large (Claim~\ref{largeboundary}) connected graphs.
In addition we use Lemma~\ref{lswap} to argue maximality of $S$. We also use a combination of Lemma~\ref{lswap}, Lemma~\ref{connected1}, Lemma~\ref{connected2}, and Lemma~\ref{largeboundary}. For simple reference we capture this as this claim.
\begin{claim}\label{new}
Assume we apply Lemma~\ref{lswap} and denote $S-T$ or $S\cup T$ (according to which statement we use, $T$ is from the lemma statement) as $S'$. We may apply
Lemma~\ref{lswap} repeatedly and again name the result $S'$ (we set $S$ to be $S'$ and, again name $S-T$ or $S\cup T$ as $S'$, $T$ may be different). Then graph induced by $S$ and graph induced by $V(G-S)$ cannot have a component with boundary smaller that $4k+9$.
\end{claim}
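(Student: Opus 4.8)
The statement of Claim~\ref{new} is essentially a bookkeeping device: it asserts that the combination of Lemma~\ref{lswap} (which preserves $\phi(S,b)$ under the stated conditions on $T$), Lemma~\ref{connected1}, Lemma~\ref{connected2}, and Claim~\ref{largeboundary} forces every component of the graph induced by the perturbed set $S'$ (and of its complement) to have boundary at least $4k+9$. So my plan would be to prove it by induction on the number of applications of Lemma~\ref{lswap}, taking as the invariant: ``$\phi(S',b)=\phi(b)=\phi(G)$, and every component $C$ of the graph induced by $S'$ with $\partial(C)\neq 0$, and likewise every component of the graph induced by $V(G)-S'$, satisfies $\partial(C)\ge 4k+9$.'' The base case $S'=S$ is exactly what we already know: $\phi(S,b)=\phi(G)$ by the choice of $S$; Lemma~\ref{connected1} applied to any nontrivial partition of $S$ into a component and the rest shows each component $C\sqsubset G$ of $S$ has $\phi(C,b)=\phi(b)$, and then Claim~\ref{largeboundary} gives $\partial(C)\ge 4k+9$; symmetrically for $V(G)-S$ via Lemma~\ref{connected2} (or Lemma~\ref{connected1} applied to $-b$).

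For the inductive step, suppose the invariant holds for the current $S'$ and we apply Lemma~\ref{lswap} with some $T$ (with $b(T)=0$ and the edges of $\partial(T)$ split evenly between $S'$ and its complement), obtaining $S''$ equal to $S'-T$ or $S'\cup T$. By Lemma~\ref{lswap} we still have $\phi(S'',b)=\phi(S',b)=\phi(b)$, so $S''$ again realizes the maximum. Now I take any component $C$ of the graph induced by $S''$ with $\partial(C)\neq 0$. Writing $S''$ as the disjoint union of $C$ and $S''-C$ (no edges between them), Lemma~\ref{connected1}---if $C$ and $S''-C$ are both nonempty and both lie in $G$, i.e.\ have nonzero boundary---yields $\phi(C,b)=\phi(b)>4$ (by Claim~\ref{c4}), and then Lemma~\ref{lflow} with the bound $\phi(b)=\phi(G)<4+1/(k+1)$ gives $\partial(C)\ge 4(k+1)+5=4k+9$, which is Claim~\ref{largeboundary}. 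The only loose end in this sub-argument is when $S''-C$ is empty, i.e.\ the induced graph on $S''$ is connected: then $\partial(C)=\partial(S'')=\partial(S')\ge 4k+9$ directly, since $\partial(S)$ is unchanged by the swaps (Lemma~\ref{lswap} does not alter which vertex is $h$, so $S''$ is still a legitimate set with $\phi(S'',b)=\phi(G)$, hence $\partial(S'')\ge 4k+9$ by Claim~\ref{largeboundary} applied to $S''$ in place of $S$). For a component $C$ of the graph induced by $V(G)-S''$ one argues identically, using Lemma~\ref{connected2} in place of Lemma~\ref{connected1} (equivalently, replacing $b$ by $-b$ and $S''$ by its complement, as in the proof of Lemma~\ref{connected2}).

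The main point to be careful about---and the only place where there is any real content rather than routine citation---is the borderline case analysis: when the partition $S''=C\,\dot\cup\,(S''-C)$ has a piece with zero boundary, Lemma~\ref{connected1} does not literally apply, so one must handle these degenerate cases by hand, observing that a zero-boundary piece contributes nothing (and by the parity/balance remark in Section 2, a zero-boundary set has $b$-sum $0$, so it could even have been folded into $T$). In practice every set $S'$ produced along the way is connected when it matters, because the lemma is only invoked in the block-by-block analysis where connectivity is tracked explicitly; so the honest statement is that Claim~\ref{new} is an immediate consequence of Claims~\ref{largeboundary} and the cited lemmas once one records that $\phi(S',b)=\phi(G)$ is preserved and $\partial(S)$ is invariant under the swaps. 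I would therefore present the proof as the short induction above, relegating the zero-boundary degeneracies to a one-line parenthetical remark.
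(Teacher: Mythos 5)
Your proof is correct and matches the paper's intent exactly: the paper offers no written proof for this claim, presenting it precisely as the combination of Lemma~\ref{lswap} (which preserves $\phi(\cdot,b)$ and hence keeps the swapped set extremal), Lemma~\ref{connected1}, Lemma~\ref{connected2}, and Claim~\ref{largeboundary} that your induction spells out. The only remark worth adding is that your worry about zero-boundary pieces is vacuous, since $G$ is connected and so every nonempty proper subset has nonzero boundary.
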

\
Now we are ready to list all possible block types.

\begin{claim}
The possible types of blocks of $G$, up to reverse, are depicted in Figure~\ref{figc1}, Figure~\ref{figc2}, Figure~\ref{figc3}, or can be obtained by twisting the types on Figure~\ref{figc2}; vertices in $S$ are red, vertices not in $S$ are blue in the figures. Moreover, the vertices outside the block coloured red (blue) are guaranteed to belong (not to belong) to $S$.
\end{claim}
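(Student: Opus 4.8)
The proof is a finite case analysis carried out one block at a time. First I would fix a block $i$ and make explicit the data that constitutes its type: the restriction of $b$ to $\{a_i,b_i,c_i,d_i,e_i,f_i,g_i\}$ (i.e.\ which of these seven degree-$3$ vertices are white and which are black), the trace $S\cap\{a_i,\dots,g_i\}$, and --- since the statement also asserts forced membership of the four interface vertices --- which of $b_{i-1},g_{i-1},a_{i+1},f_{i+1}$ lie in $S$. The colouring part is already almost settled: by the previous claim the colouring of block $i$ is, up to colour switching, reversing and twisting, one of Configurations~1--4, and Configuration~4 is obtained from Configuration~2 by twisting and reversing; so it suffices to treat three colourings, and for each of them there are only finitely many candidate traces $S\cap\{a_i,\dots,g_i\}$ to examine.

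The three pruning tools are the following. \emph{(i) Boundary rule.} Since $\phi(b)=\phi(G)>4>3$ and every block vertex has degree $3$, Lemma~\ref{lboundary} forces every edge with exactly one end in $S$ that is incident to the block to run from a white vertex (its $S$-end) to a black vertex (its non-$S$-end); in particular, applied to $d_ih$ with $h\in S$, it forces $d_i$ to be black whenever $d_i\notin S$. This already discards every trace whose boundary inside the block is incompatible with the white/black pattern of the chosen colouring. \emph{(ii) No small component.} Both $S$ and $V(G)-S$ are connected (Claim~\ref{connectedcomplement}), and by Claim~\ref{largeboundary} any set realising $\phi(b)$ has boundary at least $4k+9$; combining this with Lemmas~\ref{connected1} and~\ref{connected2}, no splitting of $S$ or of $V(G)-S$ can produce a component of boundary smaller than $4k+9$. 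Since a subset of the seven vertices of a single block has boundary at most $9<4k+9$, this kills every trace in which $S\cap\{a_i,\dots,g_i\}$, or its complement within the block, would be separated from the rest of the graph. \emph{(iii) Maximality via swaps.} Whenever the block contains a vertex set $T$ with $b(T)=0$ whose boundary edges split evenly between $S$ and $V(G)-S$, Lemma~\ref{lswap} lets us move $T$ across $S$ without changing $\phi$; moving it into $S$ contradicts the maximal choice of $S$ when connectivity and $h\in S$ are preserved, and otherwise Claim~\ref{new} produces a component that is too small. This eliminates the remaining ``swappable'' traces.

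Concretely, for each of the three colourings I would first determine the admissible \emph{interface profiles}, namely the intersection of $S$ with the left pair of edges $\{b_{i-1}a_i,\ g_{i-1}f_i\}$ and with the right pair $\{b_ia_{i+1},\ g_if_{i+1}\}$, using the boundary rule on the interface edges together with the colours of $a_i,b_i,f_i,g_i$; then, for each interface profile, I would propagate the white/black boundary rule inward to see which internal vertices are forced into or out of $S$, discard the traces caught by tools (ii) and (iii), and appeal to Claim~\ref{boundary} to treat the at most one left boundary block and at most one right boundary block separately from the generic blocks, through which $V(G)-S$ (or $S$) must pass on both sides. The traces that survive are exactly those listed in Figures~\ref{figc1}--\ref{figc3} together with the twists of the types in Figure~\ref{figc2}, and the red/blue labels placed on the interface vertices in the figures are precisely the memberships forced by tools (i) and (ii).

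The main obstacle is the bulk of the bookkeeping rather than any single difficult idea: one must run through a fairly large collection of (colouring, trace, interface-profile) combinations and verify that each is either eliminated by one of the three tools or matched to a type in the figures. The block automorphisms --- reversing and twisting --- together with colour switching should be used throughout to collapse symmetric cases and keep the number of genuinely distinct configurations small.
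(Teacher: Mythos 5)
Your proposal matches the paper's proof in both structure and tools: the paper likewise fixes a block, reduces to Configurations 1--3 via twisting, and prunes the candidate traces of $S$ using exactly Lemma~\ref{lboundary}, the connectivity of $S$ and $V(G)-S$ together with Claim~\ref{largeboundary}, and Lemma~\ref{lswap} combined with maximality of $S$ and Claim~\ref{new}, including the same style of interface-vertex deductions. Like yours, the paper's write-up carries out the bookkeeping explicitly only for Configuration~1 and declares the remaining configurations analogous.
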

\begin{proof}
We just analyse Configuration~1, to demonstrate the techniques used. The remaining configurations are similar. Note that we do not need to consider reversed configurations as they produce reversed types. We also do not need to consider Configuration~4 as the resulting types can be obtained by twisting the types created from Configuration~2.

Consider block $i$ and assume $b$ is in Configuration 1 on it. Due to Lemma~\ref{lboundary},
$d_i, e_i\in S$. Assume, first, $c_i\in S$. Then $b_i, g_i\in S$. We get $A_1$, $B_1$, $C_1$, or $D_1$.
Next, assume $c_i\not \in S$. One of $b_i$, $f_i$ is not in $S$ due to connectivity and size of $\partial(S)$. If only one of them is not in $S$, we get a contradiction with Lemma~\ref{lswap} and maximality of $S$. Thus 
$b_i, f_i\not \in S$. But then also $a_i, g_i \not \in S$ and the type is $E_1$.

Next assume $b$ is in Configuration 1 with switched colours. If $d_i\in S$, then $a_i, c_i, e_i, g_i\in S$. We either get type $F_1$, $G_1$, $H_1$, or the case $b_i, g_i \not\in S$. But in the last case Lemma~\ref{lswap} applied to $\{c_i, d_i\}$ leads to contradiction with Lemma~\ref{lboundary}. If $d_i\not \in S$, then $e_i \not \in S$. If $c_i \in S$, then exactly one of $b_i$, $f_i$ must be in $S$. Indeed, if both are not in $S$, then $S$ induced disconnected graph contradicting the choice of $S$, and if both are in $S$, by Claim~\ref{connectedcomplement} we have $V(G)-S=\{d_i, e_i\}$ which contradicts Claim~\ref{largeboundary}. This leads to types $I_1$ and $J_1$. If $c_i \not \in S$,
then we have one of the cases $K_1$, $L_1$, $M_1$ and $N_1$.

Further analysis gives also some information about pairs $(b, S)$ on extended block $i$. We, again, focus only on types obtained from Configuration~1, and we skip types where the desired facts are implied just by Lemma~\ref{lboundary}. In pair $I_1$ we have $b_{i-1}\in S$, otherwise, applying Lemma~\ref{lswap} on $\{b_i, c_i\}$ isolates $a_i$, a contradiction with Claim~\ref{new}. Similarly $g_{i-1}\not \in S$ because otherwise Lemma~\ref{lswap} applied on $\{f_i, g_i\}$ contradicts maximality of $S$. Pair $J_1$ is just twisted $I_1$, thus the same arguments apply. For $L_1$ (and after twisting in $M_1$), we have $a_{i+1}\not \in S$ because of the maximality of $S$ (Lemma~\ref{lswap} applied on $\{b_i, c_i\}$).
\end{proof}
 
\noindent The types created from Configuration~$4$ can be obtained by twisting types  created from Configuration~2. Such a type created from $X_2$ will be denoted by $X_4$. 

\medskip

\begin{figure}
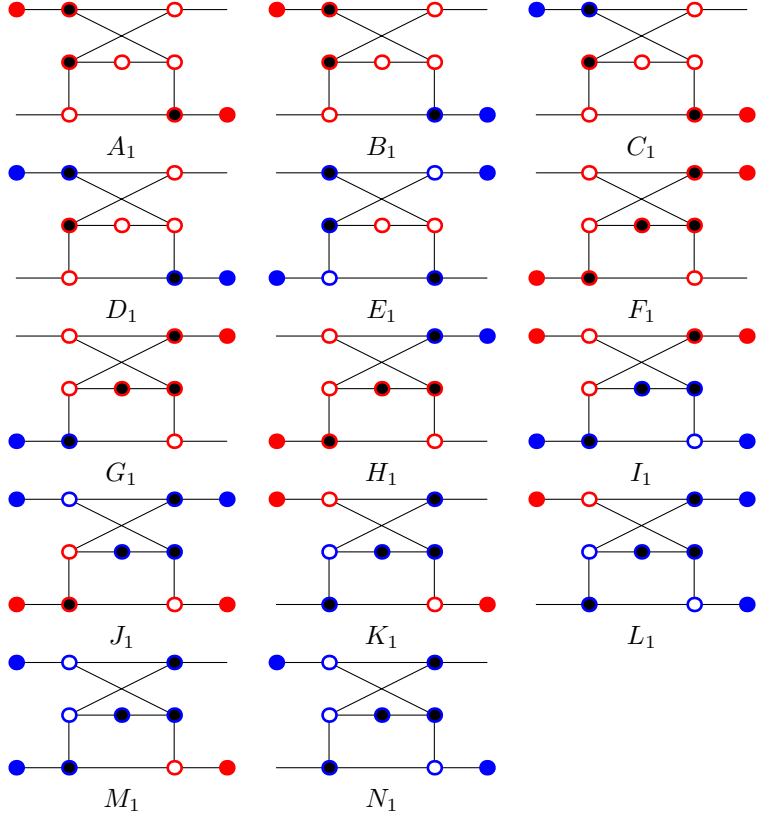

\begin{center}
\begin{tabular}{ccc}
\includegraphics{gall1.mps} &
\includegraphics{gall2.mps} &
\includegraphics{gall3.mps} \\
$A_1$ & $B_1$ & $C_1$ \\
\includegraphics{gall4.mps} &
\includegraphics{gall8.mps} &
\includegraphics{gall10.mps} \\
$D_1$ & $E_1$ & $F_1$ \\
\includegraphics{gall11.mps} &
\includegraphics{gall12.mps} &
\includegraphics{gall14.mps} \\
$G_1$ & $H_1$ & $I_1$ \\
\includegraphics{gall15.mps} &
\includegraphics{gall16.mps} &
\includegraphics{gall17.mps} \\
$J_1$ & $K_1$ & $L_1$ \\
\includegraphics{gall18.mps} &
\includegraphics{gall19.mps} & \\
$M_1$ & $N_1$ & 
\end{tabular}
\end{center}
\caption{Basic types created from Configuration~1.}\label{figc1}
\end{figure}

\begin{figure}
\begin{center}
\begin{tabular}{ccc}
\includegraphics{gall20.mps} &
\includegraphics{gall21.mps} &
\includegraphics{gall22.mps} \\
$A_2$ & $B_2$ & $C_2$ \\
\includegraphics{gall30.mps} &
\includegraphics{gall31.mps} &
\includegraphics{gall32.mps} \\
$D_2$ & $E_2$ & $F_2$ \\
\includegraphics{gall33.mps} &
\includegraphics{gall35.mps} & \\
$G_2$ & $H_2$ & 
\end{tabular}
\end{center}
\caption{Basic types created from Configuration~2.}\label{figc2}
\end{figure}

\begin{figure}
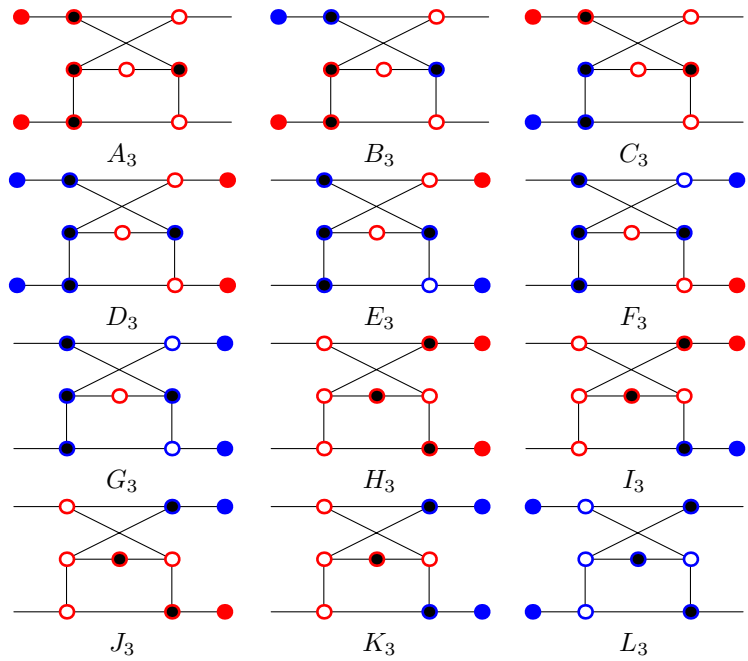

\begin{center}
\begin{tabular}{ccc}
\includegraphics{gall40.mps} &
\includegraphics{gall41.mps} &
\includegraphics{gall42.mps} \\
$A_3$ & $B_3$ & $C_3$ \\
\includegraphics{gall43.mps} &
\includegraphics{gall44.mps} &
\includegraphics{gall45.mps} \\
$D_3$ & $E_3$ & $F_3$ \\
\includegraphics{gall46.mps} &
\includegraphics{gall50.mps} &
\includegraphics{gall51.mps} \\
$G_3$ & $H_3$ & $I_3$ \\
\includegraphics{gall52.mps} &
\includegraphics{gall53.mps} &
\includegraphics{gall55.mps} \\
$J_3$ & $K_3$ & $L_3$ \\
\end{tabular}
\end{center}
\caption{Basic types created from Configuration~3.}\label{figc3}
\end{figure}

We obtain contradiction with Claim~\ref{largeboundary} using a discharging procedure. Initially, we assign charge $1$ to each edge with one endpoint in $S$ and the second in $V(G)-S$. Then move charges as follows.
\begin{enumerate}
\item Each edge sends its charge to a block containing its incident vertex that is in $S$.
\item The block whose type is one of $I_1^T$, $J_1$, $K_1$, $K_1^T$, $L_1^T$, $M_1$, $E_2$, $E_3$, $F_3$, $E_4^T$ send charge $2$ to its right neighbour,
blocks whose type is $G_2^T$, $G_4$ send $2.5$ to its right neighbour. Blocks with reverse types send the charge to its left neighbours (if both $X$ and $X^T$ is on the list, then the block sends charge to both neighbours).

\item A block with type $E_1$ or $E_1^T$ sends charge $0.5$ to both neighbours. A block with type $C_2$, $C_2^T$, $G_3$, $G_3^T$, $C_4$, $C_4^T$, receives charge from its left (right) neighbour it forwards the charge to its right (left) neighbour.
\end{enumerate}

We say that a block $i$ \emph{receives charge from left} (\emph{from right}) if it receives charge from $b_{i-1}a_i$ ($b_ia_{i+1}$), $g_{i-1}f_i$ ($g_if_{i+1}$), or its left (right) neighbour.

\begin{claim}
There is no $D_3$ or $D_3^T$ block.
\end{claim}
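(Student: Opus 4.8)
The plan is to suppose for contradiction that some block $i$ is $D_3$; the case $D_3^T$ is then free, since $D_3^T$ is the reverse of $D_3$ and the block-reversing reflection of $G$ preserves every hypothesis we have placed on the pair $(b,S)$. First I would spell out what ``block $i$ is $D_3$'' means: the restriction of $b$ to block $i$ is Configuration~3 (so, up to the relevant symmetry, $a_i,d_i,f_i$ get one colour and $b_i,c_i,e_i,g_i$ the other), together with the $S$-membership pattern on $\{a_i,\dots ,g_i\}$ shown in Figure~\ref{figc3} and whatever is already committed among $b_{i-1},g_{i-1},a_{i+1},f_{i+1}$ from the extended-block part of the type analysis (some of these may still be free, giving a handful of sub-cases). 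The main working tool will be Lemma~\ref{lboundary}: every vertex of $S$ on the cut is white, every vertex of $V(G)-S$ on the cut is black. Since the colours inside block $i$ and of $b_{i-1},g_{i-1},a_{i+1},f_{i+1}$ are essentially fixed by Configuration~3, Lemma~\ref{lboundary} immediately kills any branch in which the ``wrong'' vertex is pushed onto the cut.

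Next I would follow the part of $V(G)-S$ that meets block $i$ out of the block. By Claim~\ref{connectedcomplement} the graph induced by $V(G)-S$ is connected and misses $h$, and by Claim~\ref{largeboundary} together with Claim~\ref{new} no component of the graph induced by $S$, nor of the one induced by $V(G)-S$, may have boundary smaller than $4k+9$; so no small piece of block $i$ can be detached. Hence the ``blue'' part of block $i$ must exit, either left through $a_ib_{i-1}$ or $f_ig_{i-1}$, or right through $b_ia_{i+1}$ or $g_if_{i+1}$, and in each case this forces colours and $S$-membership onto some of $b_{i-1},g_{i-1},a_{i+1},f_{i+1}$; consulting the admissible block types (Figures~\ref{figc1}, \ref{figc2}, \ref{figc3} and the twists of the Configuration-2 types) then leaves only finitely many possibilities for blocks $i-1$ and $i+1$. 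For each survivor I expect a contradiction in one of three standard ways: directly from Lemma~\ref{lboundary}; from Lemma~\ref{lswap} applied to a small set $T$ with $b(T)=0$ whose boundary splits evenly between $S$ and $V(G)-S$ --- if $T\subseteq V(G)-S$ this enlarges $S$, contradicting its maximality, and if $T\subseteq S$ then $S\setminus T$ acquires a small-boundary component, contradicting Claim~\ref{new}; or from one of Claim~\ref{path}, Claim~\ref{tree}, Claim~\ref{circuits}, if the forced colouring creates a monochromatic path on three vertices, a seven-vertex tree with a single vertex of one colour, or two $5$-circuits meeting in three vertices with only two vertices of one colour. Natural candidates for $T$ are two-vertex sets such as $\{c_i,d_i\}$ or $\{d_i,e_i\}$, or an analogous black--white pair straddling the boundary to a neighbour: such a $T$ has $b(T)=0$, and one checks that its four incident cut-edges divide two-and-two exactly as the $D_3$ pattern dictates.

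The step I expect to be the real obstacle is precisely this middle one: the tree of cases over the exit direction of $V(G)-S$ and the admissible neighbouring types is moderately wide, and at each leaf one must pick the right witness and verify its hypotheses --- above all, for every use of Lemma~\ref{lswap} one has to confirm both $b(T)=0$ and that exactly half of $\partial(T)$ falls in $S$, and one must be sure the swap does not disconnect $S$ or $V(G)-S$ in a way that voids the subsequent appeal to Claim~\ref{new}. The extended-block data committed during the type analysis is what keeps this tractable: it is this extra information, beyond the single-block picture, that rules $D_3$ out even though $D_3$ itself survives in the type list.
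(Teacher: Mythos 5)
Your overall strategy is the right one and matches the paper's: use the colour and $S$-membership data that the $D_3$ type forces onto the extended block to restrict the possible types of the adjacent block, and then kill each surviving case with Lemma~\ref{lswap} (against maximality of $S$) or with one of the forbidden colour patterns (Claims~\ref{path}, \ref{tree}, \ref{circuits}). But as written the proposal stops exactly where the proof begins: you announce that the case tree is "moderately wide" and that at each leaf one "expects" a contradiction by one of three standard mechanisms, without ever enumerating the cases or exhibiting the witnesses. The entire content of this claim is that enumeration. Concretely, the $D_3$ pattern forces $b_{i-1}$ and $g_{i-1}$ to be white and outside $S$, which (consulting the type list) leaves only four possibilities for block $i-1$, namely $C_2$, $C_4^T$, $L_3^T$, and $G_3$; there is no need to branch over exit directions or to examine block $i+1$ at all. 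None of this appears in your write-up.

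Your guesses for the swap sets are also off. The witnesses the argument needs are four-vertex sets straddling blocks $i-1$ and $i$, such as $\{c_i, f_i, f_{i-1}, g_{i-1}\}$, applied to contradict maximality of $S$ in the $C_2$, $C_4^T$, and $L_3^T$ cases; the two-vertex sets $\{c_i,d_i\}$ or $\{d_i,e_i\}$ inside block $i$ that you propose do not do the job (and you give no verification that they satisfy the hypotheses of Lemma~\ref{lswap} for the $D_3$ membership pattern). The remaining case, $G_3$, is not eliminated by a swap at all but by Claim~\ref{tree}, applied to the specific seven-vertex tree $\{a_i, e_i, b_{i-1}, a_{i-1}, e_{i-1}, c_{i-1}, f_{i-1}\}$; your proposal mentions Claim~\ref{tree} only as one of several possible weapons and never identifies a tree. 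So while the skeleton is correct, the proposal is a plan rather than a proof: the case list, the choice of witness for each case, and the verification of the hypotheses at each leaf are all missing.
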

\begin{proof}
Suppose block $i$ is $D_3$ (the other case is obtained by reversing). As $b_{i-1}$ and $g_{i-1}$ must be white and outside $S$ block $i-1$ is one of $C_2$, $C_4^T$, $L_3^T$, or $G_3$. The first three cases contradict the maximality of $S$ with Lemma~\ref{lswap} applied on sets 
$\{c_i, f_i, f_{i-1}, g_{i-1}\}$, $\{c_i, f_i, a_{i-1}, a_{i-1}\}$, or 
$\{c_i, f_i, f_{i-1}, g_{i-1}\}$, respectively. The $G_3$ case contradicts Claim~\ref{tree} for $\{a_i,e_i, b_{i-1}, a_{i-1}, e_{i-1}, c_{i-1}, f_{i-1}\}$.
\end{proof}

\begin{claim} \label{cl01}
If a block receives charge from left (right) in Step~2, then it is the right (left) boundary. If it receives charge in Step~2 it does not receive charge in Step~3.
\end{claim}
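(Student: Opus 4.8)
The plan is to prove both assertions by a finite inspection of the block types that emit charge in Step~2. The entire setup --- the structural claims of the previous pages, the classification of block types, and the discharging rules --- is invariant under the automorphism of $G$ that reverses every block; hence it suffices to treat a block $i$ that receives charge \emph{from the left} in Step~2, the \emph{from the right} statement following by this symmetry.

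A block $i$ receives charge from the left in Step~2 precisely when its left neighbour, block $i-1$, has one of the twelve types $I_1^T, J_1, K_1, K_1^T, L_1^T, M_1, E_2, E_3, F_3, E_4^T, G_2^T, G_4$. For each of these I would use the data recorded in the classification (Figures~\ref{figc1}--\ref{figc3}), in particular the forced membership on the extended block that was obtained there using Lemma~\ref{lswap} and Claim~\ref{new}, to pin down whether $a_i$ and $f_i$ lie in $S$ and to see that $V(G)-S$ cannot be continued from block $i-1$ across the edges $b_{i-1}a_i, g_{i-1}f_i$ so as to reach the right-hand pair $\{b_i, g_i\}$ of block $i$. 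Going through the twelve cases, in each one block $i$ turns out to satisfy one of the four defining conditions of a right boundary: if one or both of $b_{i-1}, g_{i-1}$ lies outside $S$ while the corresponding inner vertex of block $i$ lies in $S$, this is one of the first three conditions; otherwise the connectivity of $V(G)-S$ (Claim~\ref{connectedcomplement}) forbids a $V(G)-S$-path between $\{a_i, f_i\}$ and $\{b_i, g_i\}$, which is the fourth. Thus block $i$ is a right boundary, and it is the only one by Claim~\ref{boundary}.

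For the Step~3 assertion, assume block $i$ received charge in Step~2, so it is the right boundary; then $V(G)-S$ reaches into the left part of block $i$ but no further, so block $i-1$ lies on the arc $V(G)-S$ whereas block $i+1$ lies beyond its right end and is essentially contained in $S$. In Step~3 charge is emitted only by $E_1$- and $E_1^T$-blocks and relayed only by the forwarding types $C_2, C_2^T, G_3, G_3^T, C_4, C_4^T$, so it suffices to show neither neighbour of block $i$ does either of these toward block $i$. The left neighbour block $i-1$ is one of the twelve Step~2 emitters above, and these types together with their reverses are disjoint from $\{E_1, E_1^T, C_2, C_2^T, G_3, G_3^T, C_4, C_4^T\}$, so block $i-1$ neither emits nor relays in Step~3. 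The right neighbour block $i+1$ is met by $V(G)-S$ in at most a bounded set of vertices (using Claim~\ref{connectedcomplement} and Claim~\ref{boundary} to rule out that the arc re-enters it), whereas each of $E_1, E_1^T, C_2, \dots, C_4^T$ has several vertices outside $S$; hence block $i+1$ has none of these types and does not relay charge into block $i$ either. Therefore block $i$ gains no charge in Step~3.

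I expect the twelve-case verification to be the main obstacle. Within it, the delicate point is the fourth boundary condition: there one cannot conclude from local adjacency alone and must instead bound how deep $V(G)-S$ can penetrate block $i$, which is exactly where the maximality of $S$ (Lemma~\ref{lswap}, Claim~\ref{new}) together with the connectivity of $S$ and of $V(G)-S$ does the work.
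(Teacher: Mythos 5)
Your overall strategy is the right one, and your identification of where the difficulty lies (the fourth boundary condition, which cannot be read off from local adjacency) is accurate. But the proposal stops exactly there: the twelve-case verification that you defer is not a routine check — it \emph{is} the proof. Asserting that ``going through the twelve cases, in each one block $i$ turns out to satisfy one of the four defining conditions of a right boundary'' without exhibiting the arguments leaves the entire substance missing. The paper organizes this more efficiently by arguing in the contrapositive: it assumes block $i$ is \emph{not} a right boundary, which immediately forces $f_i \notin S$ together with a $V(G)-S$-path from $f_i$ to $b_i$ or $g_i$; this narrows the type of block $i$ itself to eight possibilities ($I_1$, $I_1^T$, $K_1$, $L_1$, $E_2$, $E_2^T$, $G_2$, $E_3^T$), each of which is then killed by a specific, non-obvious combination of tools --- Claim~\ref{path} or Claim~\ref{tree} applied to explicit vertex sets straddling blocks $i-1$ and $i$, or Lemma~\ref{lswap} applied to $\{b_i,c_i\}$ or $\{a_i,e_i\}$ followed by Lemma~\ref{lboundary}, Lemma~\ref{connected1}, or Claim~\ref{new} (the pairing $I_1^T$ against $G_2^T$ needs two successive swaps). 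None of these steps is supplied or even sketched in your proposal, so the first assertion of the claim is not established. Note also that the paper reduces to six emitter types for block $i-1$ (those with $b_{i-1}\in S$) and obtains the other six by twisting, rather than treating all twelve separately.

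On the second assertion your argument for the left neighbour is correct and essentially complete: block $i-1$ is one of the twelve Step~2 emitters, none of which is $E_1$, $E_1^T$, or a forwarding type, so no Step~3 charge arrives from the left. Your argument for the right neighbour ("block $i+1$ is essentially contained in $S$, whereas each relaying type has several vertices outside $S$") is plausible but rests on an unproved assertion about how far $V(G)-S$ can extend past the right boundary; to make it rigorous you would need to invoke Claim~\ref{connectedcomplement} and Claim~\ref{boundary} explicitly to show that $V(G)-S$ meets block $i+1$ in too few vertices to realize any of $E_1$, $E_1^T$, $C_2$, $C_2^T$, $G_3$, $G_3^T$, $C_4$, $C_4^T$.
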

\begin{proof}
We will consider only the case when block $i$ receives charge in Step~2 from its left neighbour and $b_{i-1} \in S$. The cases when $g_{i-1} \in S$ can be obtained by twisting and reverse; the cases when block $i$ receives charge from right neighbour can be obtained by reverse. 

Type of block $i-1$ is one of $I_1^T$, $K_1^T$, $L_1^T$, $E_2$, $G_2^T$, $E_3$. 
In all cases $g_{i-1} \not \in S$ and $b_{i-1}, a_i \in S$ and $g_{i-1}\not \in S$. 
Assume, for contradiction that block $i$ is not right boundary. By the definition of right boundary implies $f_i \not \in S$ and that there is a path of vertices from $V(G)-S$ that goes from $f_i$ to $b_i$, or $g_i$. 
The possible types for block $i$ are $I_1$, $I_1^T$, $K_1$, $L_1$
$E_2$, $E_2^T$, $G_2$, $E_3^T$. 

Assume first that the type of block $i$ is one of $E_2$, $E_2^T$, $G_2$.
But in all these cases $f_i,g_i$ are black. This contradicts Claim~\ref{path} (with $f_i, g_i, g_{i-1}$) or Claim~\ref{tree} (with $f_i, g_i, g_{i-1}$,$f_{i-1}$, $c_{i-1}$, $e_{i-1}$, $a_{i-1}$). Similar argument holds if block $i-1$ is $E_2$ or $G_2^T$, except for the case where block $i$ is $I_1^T$. To handle these cases one has to use Lemma~\ref{lswap} on $\{a_i, e_i\}$. This yields immediate contradiction with Lemma~\ref{lboundary} if block $i-1$ is $E_2$. If block $i-1$ is $G_2^T$, then we, in addition, we apply Lemma~\ref{lswap} on $\{e_{i-1}, f_{i-1}\}$. We obtain a contradiction using Claim~\ref{new}.

In the remaining cases to obtain a contradiction we use Lemma~\ref{lswap}
 on $\{b_i,c_i\}$ if the block is $I_1$, $\{a_i,e_i\}$ if the block is $I_1^T$, and do nothing otherwise. But this makes $S$ induce disconnected and  one of which is small, contradicting Lemma~\ref{connected1} and Lemma~\ref{lboundary}.
\end{proof}

Now we derive bounds on what charge can a block receive from left and right.
\begin{claim} \label{cl02}
The maximum charge a block can receive from left (right)  is $3.5$, $2.5$, or $0.5$  if $|\{a_i, f_i\} \cap S|$ ($|\{b_i, g_i\} \cap S|$) is $2$, $1$, or $0$, respectively.
\end{claim}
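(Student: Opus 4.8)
The plan is to track, for a fixed block $i$, exactly which edges and neighbour-forwarding events can deliver charge into block $i$ from the left, and to bound the total by examining the block's type together with what the definitions (and Claim~\ref{cl01}) force about the left neighbour. Charge entering block $i$ from the left arrives only through the two edges $b_{i-1}a_i$ and $g_{i-1}f_i$ (each carrying $1$ if its right endpoint, $a_i$ resp.\ $f_i$, lies in $S$ and its left endpoint does not) and through the Step~2/Step~3 transfers from block $i-1$. So the first step is the trivial edge-count: the edges $b_{i-1}a_i$ and $g_{i-1}f_i$ together contribute at most $|\{a_i,f_i\}\cap S|$ units, because an edge contributes its unit to block $i$ only when its endpoint inside the block is in $S$ (Step~1), and it contributes at most one unit. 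This already gives the coarse bounds $2$, $1$, $0$ matching the three cases; the refinement $3.5$, $2.5$, $0.5$ comes from adding the block-to-block transfer, which is at most $0.5$ extra \emph{per unit of edge-charge that is actually present}, and we must show no more than $0.5$ of genuinely new charge can come from the left neighbour on top of a present edge.

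Next I would go through the ways block $i-1$ can send charge rightward. Step~2 sends $2$ (from types $I_1^T,J_1,K_1,K_1^T,L_1^T,M_1,E_2,E_3,F_3,E_4^T$) or $2.5$ (from $G_2^T,G_4$); Step~3 sends $0.5$ (from $E_1$ or $E_1^T$) or forwards whatever came in (from $C_2,C_2^T,G_3,G_3^T,C_4,C_4^T$). By Claim~\ref{cl01}, if block $i$ receives a Step~2 transfer from the left it is the right boundary, and then it receives nothing in Step~3; moreover being the right boundary pins down which types block $i$ can be, and in those types one checks directly that $|\{a_i,f_i\}\cap S|$ is large enough (namely $2$, giving the $3.5$ bound, since $2$ edge-units plus a $2.5$ transfer is $4.5 > 3.5$ — so in fact one must verify that when the transfer is the large $2.5$ the right-boundary block has at most one of the two incoming edges actually present, or else re-examine; this is the delicate point). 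For the forwarding types $C_2$ etc., the charge they forward to the right is charge they themselves received from the left, so by induction along the chain of consecutive forwarding blocks the amount forwarded into block $i$ is bounded by what the first non-forwarding block further left emitted, and Claim~\ref{cl01} applied there again forces a boundary, keeping the total controlled.

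The bookkeeping then reduces to a finite case check: for each type $X$ that block $i$ can be (from Figures~\ref{figc1}--\ref{figc3} and their twists/reverses), read off $|\{a_i,f_i\}\cap S|$, list which left-neighbour types are compatible with the already-established facts about $b_{i-1},g_{i-1}$ (these are recorded in the type descriptions, e.g.\ the extra membership facts proved for $I_1,J_1,L_1,M_1$), and sum the maximal possible edge-charge and transfer. In every case the sum is at most $3.5$, $2.5$, or $0.5$ according to whether $|\{a_i,f_i\}\cap S|$ is $2$, $1$, or $0$. The case for ``from right'' is identical after applying the block-reversing symmetry, which swaps $\{a_i,f_i\}$ with $\{b_i,g_i\}$ and left with right. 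I expect the main obstacle to be the case where a neighbour sends the large $2.5$ charge (types $G_2^T$, $G_4$): one has to be sure that the receiving block genuinely has both of $a_i,f_i$ in $S$ and that no Step~3 charge is added on top, so that $2.5 + \text{(edge charge)} \le 3.5$; this forces a careful look at exactly which right-boundary types are adjacent to a $G_2^T$ or $G_4$ block, and is the only place where the bound is tight.
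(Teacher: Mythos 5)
The paper states Claim~\ref{cl02} without proof: it is meant to follow by inspecting the finitely many block types in Figures~\ref{figc1}--\ref{figc3} together with the discharging rules and Claim~\ref{cl01}. Your plan has exactly that shape, and you correctly identify where the bound is tight, but as written it is a plan with the two delicate points left unresolved rather than a proof.

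First, the $2.5$-senders. You write that when the left neighbour sends $2.5$ one ``must verify that the right-boundary block has at most one of the two incoming edges actually present, or else re-examine.'' This is resolvable, and the resolution is already recorded in the proof of Claim~\ref{cl01}: whenever block $i$ receives Step~2 charge from block $i-1$, one of $b_{i-1},g_{i-1}$ lies in $S$ (and the corresponding vertex $a_i$ resp.\ $f_i$ lies in $S$ as well). The edge from the $S$-vertex of block $i-1$ is then either not a boundary edge or sends its unit to block $i-1$, so the Step~1 contribution from the left is at most $1$, giving $1+2.5=3.5$ only when $|\{a_i,f_i\}\cap S|=2$, and $0+2.5=2.5$ when it is $1$. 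You should make this step explicit rather than conditional. Second, and more seriously, your treatment of the forwarding types $C_2$, $C_2^T$, $G_3$, $G_3^T$, $C_4$, $C_4^T$ is not adequate: saying the forwarded amount is ``bounded by what the first non-forwarding block emitted'' and hence ``controlled'' does not suffice, because a forwarded Step~2 charge of $2$ or $2.5$ arriving at a block with $|\{a_i,f_i\}\cap S|=1$ together with one unit of edge charge would give $3$ or $3.5$, exceeding the claimed bound of $2.5$; and a forwarded charge of any size exceeding $0.5$ arriving at a block with $|\{a_i,f_i\}\cap S|=0$ would exceed the bound $0.5$. To close this you must check from the figures that a forwarding block which itself receives Step~2 charge from the left (and is therefore, by Claim~\ref{cl01}, the right boundary) cannot pass that charge onward into a block where the totals overflow --- in practice, that the only charge a forwarding chain actually carries is the $0.5$ emitted by an $E_1$-type block, as in the paper's later $E_1$ bullet. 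Until that configuration is excluded, the claimed bounds are not established.
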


\begin{claim} \label{cl02b}
The maximum charge a block can receive from left (right) its left (right) neighbour is not $G_2^T$ or $G_4$ ($G_2$ or $G_4^T$) is $3$, $2$, or $0.5$  if $|\{a_i, f_i\} \cap S|$ ($|\{b_i, g_i\} \cap S|$) is $2$, $1$, or $0$, respectively.
\end{claim}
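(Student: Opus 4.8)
The plan is to prove Claim~\ref{cl02b} by the same kind of exhaustive case analysis used for Claim~\ref{cl02}, but now using the extra hypothesis that the relevant neighbour's type is not $G_2^T$ or $G_4$ (respectively $G_2$ or $G_4^T$) to shave off the $2.5$-contributions coming from Step~2. First I would reduce, by the twisting and reversing symmetries, to the case of charge received \emph{from the left}; as in Claim~\ref{cl01}, the case $b_{i-1}\in S$ and the case $g_{i-1}\in S$ are related by twisting, so it suffices to treat, say, the left boundary of block $i$ received through the edges $b_{i-1}a_i$, $g_{i-1}f_i$ and through the block $i-1$. The charge into block $i$ from the left has two sources: the at most two boundary edges $b_{i-1}a_i$ and $g_{i-1}f_i$ (each carrying charge $1$ only when its endpoint in $\{a_i,f_i\}$ lies in $S$ and the other endpoint lies outside $S$), plus whatever block $i-1$ forwards in Steps~2 and~3.

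Next I would bound the Step~2/Step~3 contribution. By Claim~\ref{cl01}, if block $i$ receives any charge from the left in Step~2 it is the right boundary and then receives nothing in Step~3, so the two mechanisms do not stack. The Step~2 senders to a right neighbour are exactly the types $I_1^T,J_1,K_1,K_1^T,L_1^T,M_1,E_2,E_3,F_3,E_4^T$ (sending $2$) and $G_2^T,G_4$ (sending $2.5$); excluding $G_2^T$ and $G_4$ by hypothesis, the Step~2 forward is at most $2$. In Step~3, a block of type $E_1$ or $E_1^T$ sends $0.5$ to each neighbour, and the ``forwarding'' types $C_2,C_2^T,G_3,G_3^T,C_4,C_4^T$ just pass along whatever they received, so what block $i-1$ can forward to block $i$ in Step~3 is itself bounded by the charge budget of block $i-1$ from \emph{its} left — which, chasing the definitions, is again at most $2$ (one per boundary edge $b_{i-2}a_{i-1}$, $g_{i-2}f_{i-1}$) plus a possible $0.5$ from an $E_1$-type, i.e.\ the forwarding chain cannot exceed what a single boundary contributes. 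The key numeric observation is then: (i) if $|\{a_i,f_i\}\cap S|=2$, the two boundary edges give at most $2$, the neighbour forwards at most $2$, but one checks by inspecting the admissible type-pairs that these two maxima are not simultaneously attained, so the total is at most $3$; (ii) if $|\{a_i,f_i\}\cap S|=1$, one boundary edge gives at most $1$ and the neighbour forwards at most $2$, one of which again is not simultaneously attainable in the relevant type-pairs, giving at most $2$; (iii) if $|\{a_i,f_i\}\cap S|=0$, no boundary edge carries charge, so only a Step~3 $E_1$-contribution of $0.5$ survives.

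The third and most delicate step is the compatibility bookkeeping: I must verify, for each admissible type $Y$ of block $i-1$ with $Y\notin\{G_2^T,G_4\}$ and each admissible type $X$ of block $i$, that the pair $(Y,X)$ is actually realisable (their overlap on the shared vertices $b_{i-1},g_{i-1}\in$ block $i$'s extended region is consistent) and, when it is, that the combined left-charge into $X$ obeys the stated bound. This is where Lemma~\ref{lboundary}, Lemma~\ref{lswap}, Claim~\ref{new}, Claim~\ref{path}, Claim~\ref{tree}, and Claim~\ref{circuits} do the work of killing off the bad pairs — exactly as in the proofs of Claim~\ref{cl01} and the preceding ``possible types'' claim. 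Concretely, the tight cases are when block $i-1$ is one of the Step~2 senders $I_1^T,K_1^T,L_1^T,E_2,E_3,E_4^T$ sending $2$ forward: I would show that in each such case the boundary edge $g_{i-1}f_i$ (or $b_{i-1}a_i$) cannot also carry charge into block $i$ because its block-$i-1$ endpoint is forced into $S$ with its partner, or because the opposite colour/connectivity constraints from Claim~\ref{path}/Claim~\ref{tree} are violated — the same swap-and-isolate arguments already deployed for $I_1$, $L_1$, $M_1$ in the types claim.

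The main obstacle I anticipate is precisely this last step: it is a finite but sizable case check over pairs of the roughly thirty-plus basic block types in Figures~\ref{figc1}--\ref{figc3} (and their twists from Configuration~4), and the danger is missing an unexpected realisable pair that attains both the boundary-edge charge and a Step~2 forward of $2$ for a block with $|\{a_i,f_i\}\cap S|=2$, which would break the $3$ bound. I would organise the check by first fixing the colour-configuration of block $i-1$ among the four of Figure~\ref{confs}, then the status of $\{b_{i-1},g_{i-1}\}$ with respect to $S$ (which is exactly the data block $i$ sees), and only then enumerate compatible $X$; the exclusion of $G_2^T$ and $G_4$ removes exactly the two type-pairs where an honest $2.5$-forward coincides with a loaded boundary edge, which is why the hypothesis is stated the way it is.
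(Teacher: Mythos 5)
The paper states Claim~\ref{cl02b} (like Claims~\ref{cl02} and~\ref{cl03}) with no written proof at all, so there is no official argument to compare against; your outline reconstructs what the author evidently intends: split the left-charge into the Step~1 contributions of the two edges $b_{i-1}a_i$, $g_{i-1}f_i$ and the Step~2/Step~3 contribution of block $i-1$, invoke Claim~\ref{cl01} to rule out receiving in both Step~2 and Step~3, cap the Step~2 forward at $2$ once $G_2^T$ and $G_4$ are excluded, and then check that a Step~2 forward can coexist with at most one charged boundary edge. That last check is the real content, and the fact you need is already recorded inside the proof of Claim~\ref{cl01}: for every type that sends charge rightward in Step~2 one has (up to twisting) $b_{i-1},a_i\in S$ and $g_{i-1}\notin S$, so the edge $b_{i-1}a_i$ never carries charge into block $i$, and in the case $|\{a_i,f_i\}\cap S|=1$ the vertex forced into $S$ is exactly $a_i$, which kills both edges and leaves the bare Step~2 charge of $2$. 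You gesture at precisely this ("its block $i-1$ endpoint is forced into $S$ with its partner") but leave the enumeration as a to-do; since the paper leaves the same enumeration implicit, that is a fair place to stop.

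One step of your write-up is wrong as stated, even though your final tallies come out right: you bound what block $i-1$ can forward in Step~3 by "at most $2$ (one per boundary edge) plus a possible $0.5$". The Step~3 forwarding rule only propagates charge received \emph{from a neighbour block}, never the Step~1 edge charges, so the only thing that can travel along a chain of $C_2/G_3/C_4$ blocks is the $0.5$ emitted by an $E_1$-type block; the Step~3 contribution from the left is therefore at most $0.5$, full stop (with Claim~\ref{cl01} preventing a boundary block from re-exporting Step~2 charge). If forwarding really could carry $2$, your own case (iii) bound of $0.5$ for $|\{a_i,f_i\}\cap S|=0$ would be false. State the $0.5$ bound cleanly and the three numerical cases then close exactly as you computed them: $2+0.5$ or $1+2$ giving $3$; $1+0.5$ or $0+2$ giving $2$; and $0+0.5$ giving $0.5$.
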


\begin{claim} \label{cl03}
The maximum charge a block can receive from left (right) if it is not the right (left) boundary is $0$, $1.5$, or $0.5$  if $|\{a_i, f_i\} \cap S|$ ($|\{b_i, g_i\} \cap S|$) is $2$, $1$, or $0$, respectively.
\end{claim}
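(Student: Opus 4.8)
The plan is to prove Claim~\ref{cl03} — the bound on charge received from one side when the block is not the boundary on that side — by following the same bookkeeping strategy already used for Claim~\ref{cl02} and Claim~\ref{cl02b}, but now incorporating the key restriction coming from Claim~\ref{cl01}. The main observation is that if a block $i$ is not the right boundary, then by Claim~\ref{cl01} it cannot receive any charge from its left neighbour in Step~2 (such charge would force block $i$ to be the right boundary), and similarly it cannot receive a Step~2 charge while simultaneously receiving a Step~3 charge. So the only sources of charge from the left are: (a) the two edges $b_{i-1}a_i$ and $g_{i-1}f_i$ directed into $S$ at $a_i$, $f_i$ via Step~1, and (b) a Step~3 charge of $0.5$ from an $E_1$-type left neighbour, or a forwarded charge passing through a $C_2/C_2^T/G_3/G_3^T/C_4/C_4^T$ left neighbour.

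The case split is then driven by $|\{a_i,f_i\}\cap S|$. If $|\{a_i,f_i\}\cap S| = 2$: an edge $b_{i-1}a_i$ contributes its unit of charge to block $i$ only if $b_{i-1}\notin S$ (the edge is then cut and sends its charge to the end in $S$, namely $a_i$); but I would argue, using Lemma~\ref{lboundary} together with the type list, that when $a_i, f_i \in S$ and block $i$ is not the right boundary, the vertices $b_{i-1}, g_{i-1}$ are in $S$, so neither of the two edges $b_{i-1}a_i$, $g_{i-1}f_i$ is cut, and moreover no forwarding can occur — giving the bound $0$. If $|\{a_i,f_i\}\cap S| = 1$, say $a_i\in S$, $f_i\notin S$: the edge $b_{i-1}a_i$ may contribute $1$ if $b_{i-1}\notin S$; on top of that a left neighbour of type $E_1$ or a chain through forwarding blocks may add at most $0.5$; the edge $g_{i-1}f_i$ sends no charge to block $i$ since $f_i\notin S$. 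This totals at most $1.5$. If $|\{a_i,f_i\}\cap S| = 0$: neither boundary edge sends charge into block $i$, and only a forwarded/$E_1$ contribution of $0.5$ can arrive, giving $0.5$. Throughout, the right-hand statement follows by the block-reversing symmetry (reverse the roles of $a_i,f_i$ and $b_i,g_i$ and of left/right), exactly as in the earlier claims.

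The step I expect to be the main obstacle is the $|\{a_i,f_i\}\cap S| = 2$ case, where I must rule out that $b_{i-1}a_i$ or $g_{i-1}f_i$ is cut. This requires going through the block types in Figure~\ref{figc1}, Figure~\ref{figc2}, Figure~\ref{figc3} (and their twists) with $a_i, f_i\in S$ and checking which ones are compatible with $b_{i-1}\notin S$ or $g_{i-1}\notin S$ while block $i$ is not the right boundary; in each such case one has to invoke either Lemma~\ref{lboundary} (to force colours/membership at $b_{i-1}, g_{i-1}$), Lemma~\ref{lswap} together with Claim~\ref{new} (to contradict maximality or the $4k+9$ boundary bound), or Claim~\ref{path}/Claim~\ref{tree} (to contradict the colouring constraints), much as in the proof of Claim~\ref{cl01}. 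The secondary subtlety is making sure that forwarding through a $C_2$-like neighbour cannot combine with a cut boundary edge to exceed the stated bound — this is handled by noting that a forwarding block of type $C_2, C_2^T, G_3, G_3^T, C_4, C_4^T$ only passes charge received from \emph{its} other neighbour, and in the not-a-boundary situation that upstream charge is itself bounded by the Step~2 prohibition, so no extra accumulation occurs. Once these cases are dispatched, the three bounds $0$, $1.5$, $0.5$ fall out by simple addition.
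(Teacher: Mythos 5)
The paper states Claim~\ref{cl03} without any proof (like Claims~\ref{cl02} and~\ref{cl02b}, it is left as a consequence of the type analysis), so there is no authorial argument to compare against; judged on its own terms your reconstruction follows the intended route: Claim~\ref{cl01} kills all Step~2 contributions from the relevant side, and the three numbers $0$, $1.5$, $0.5$ come from adding the possible Step~1 edge charges to the single possible $0.5$ of Step~3. Your accounting in the cases $|\{a_i,f_i\}\cap S|=1$ and $=0$ is correct. One simplification: in the case $|\{a_i,f_i\}\cap S|=2$ you do not need Lemma~\ref{lboundary} or a sweep through the type list to conclude $b_{i-1},g_{i-1}\in S$. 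This is immediate from the definition of a right boundary: with $a_i,f_i\in S$, each of the three membership conditions (reversed from the left-boundary definition) is triggered as soon as one of $b_{i-1},g_{i-1}$ leaves $S$, so ``not a right boundary'' forces both into $S$ and neither of the edges $b_{i-1}a_i$, $g_{i-1}f_i$ is cut. This disposes of what you flag as the main obstacle.

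The genuine gap is the sentence ``moreover no forwarding can occur,'' which is asserted rather than argued, and it is the only non-trivial content of the $=2$ case once the boundary definition has been invoked. To get the stated bound of $0$ you must exclude a Step~3 contribution of $0.5$ arriving from a left neighbour with $b_{i-1},g_{i-1}\in S$. For $E_1$, $E_1^T$ and the basic types $C_2$, $C_4$ this follows from the sets used in the proof of Claim~\ref{cl04} (all of $b_{i-1},g_{i-1}$ lie outside $S$ there), and for $G_3$ it follows from the observation in the proof that there is no $D_3$ block; but for $G_3^T$ the reversal only gives $a_{i-1},f_{i-1}\notin S$ and says nothing directly about $b_{i-1},g_{i-1}$. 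So you still need to check, either from Figure~\ref{figc3} or by tracing the forwarding chain back to its $E_1$-type source as in the $E_1$ bullet of the final case analysis, that a $G_3^T$ block cannot simultaneously have $b_{i-1},g_{i-1}\in S$ and forward charge to its right neighbour. Your closing remark about forwarding not accumulating is correct but addresses a different (and easier) issue than this one.
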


Finally we need a claim to handle discharging in Step~3.
\begin{claim} \label{cl04}
Block with types $E_1$, $E_1^T$, $C_2$, $C_2^T$, $C_4$, $C_4^T$ cannot be neighbours.
\end{claim}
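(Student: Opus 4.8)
Here is how I would approach Claim~\ref{cl04}.

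The plan is to prove it by a finite case analysis on the ordered pairs $(X,Y)$ in which $X$ is the type of a block $i$ and $Y$ the type of its right neighbour $i+1$, where $X$ and $Y$ range over $E_1,E_1^T,C_2,C_2^T,C_4,C_4^T$ (the case where these two types sit on block $i$ and block $i-1$ is the same list read the other way round). Before starting I would trim the list: colour switching preserves $\phi(b)$ by Lemma~\ref{complement}, the reverse symmetry of $G$ turns a pair $(X,Y)$ into $(Y^T,X^T)$, and the automorphism of $G$ that twists every block and reverses the cyclic order of the blocks turns $C_2$-types into $C_4$-types; together these leave only a handful of representative pairs to be checked by hand.

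For each representative pair I would read off, from the figures of $X$ and of $Y$, the colours and the $S$-memberships of the interface vertices $b_i,g_i$ (and of any vertices outside block $i$ that the figure for $X$ marks) and of $a_{i+1},f_{i+1}$, together with the two joining edges $b_ia_{i+1}$ and $g_if_{i+1}$, and then derive a contradiction by one of three mechanisms. (a) \emph{A colour obstruction}: the colours forced on the vertices of degree $3$ in blocks $i$, $i+1$, and --- when the decorations pin them down --- in block $i-1$ or $i+2$, contain a monochromatic path on three vertices, a tree on seven vertices with only one vertex of a colour, or two $5$-circuits meeting in three vertices with only two vertices of a colour, contradicting Claim~\ref{path}, Claim~\ref{tree}, or Claim~\ref{circuits}. (b) \emph{A maximality-or-smallness obstruction}: one exhibits a set $T$ straddling the interface with $b(T)=0$ and with exactly half of its boundary edges meeting $S$, and applies Lemma~\ref{lswap}; the new set either strictly enlarges $S$ while keeping $\phi(b,S)=\phi(G)$, contradicting the maximality of $S$, or it breaks $S$ or $V(G)-S$ into a component of boundary below $4k+9$, contradicting Claim~\ref{largeboundary} --- using Claim~\ref{new} when several swaps must be chained. (c) \emph{A boundary-counting obstruction}: the graph induced by $V(G)-S$ is connected and omits $h$ (Claim~\ref{connectedcomplement}), and the $V(G)-S$-structure forced inside blocks $i$ and $i+1$ makes each of them a left or a right boundary in a way that produces two left boundaries or two right boundaries, contradicting Claim~\ref{boundary}.

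As a representative case, take blocks $i$ and $i+1$ both of type $E_1$. Then $d_i,e_i,d_{i+1},e_{i+1}$ are the only block vertices in $S$, they are all white, and the remaining five vertices of each block lie in $V(G)-S$ and induce the path $a_i,b_i,c_i,f_i,g_i$ and the path $a_{i+1},\dots,g_{i+1}$, joined by the edges $b_ia_{i+1}$ and $g_if_{i+1}$; hence the $V(G)-S$ side of these two blocks forms one connected chunk that reaches the rest of $V(G)-S$ only along the edges $g_{i-1}f_i$, $b_{i-1}a_i$, $b_{i+1}a_{i+2}$, and $g_{i+1}f_{i+2}$. Applying Lemma~\ref{lswap} to a set such as $T=\{c_i,f_i,g_i,f_{i+1}\}$, which has $b(T)=-1+1-1+1=0$, and then Claim~\ref{new}, forces a component of $S$ or of $V(G)-S$ below boundary $4k+9$, contradicting Claim~\ref{largeboundary}; when the boundary of this particular $T$ does not split evenly one first fixes the $S$-membership of $b_{i-1}$ or $g_{i-1}$ by a preliminary swap, or reads off a monochromatic three-vertex path. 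The pairs with a $C_2$- or $C_4$-type in place of one or both $E_1$'s are dispatched by the same mixture of moves.

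I expect the bookkeeping in mechanism (b) to be the main obstacle: in every case one must verify that the set $T$ handed to Lemma~\ref{lswap} genuinely has $b(T)=0$ and that exactly half of its boundary edges reach into $S$, and both conditions can hinge on the $S$-membership of the off-block vertices $b_{i-1},g_{i-1},a_{i+2},f_{i+2}$, which occasionally must be pinned down by a preliminary application of Lemma~\ref{lswap} before the main one; and one must check that the component cut off after the swap really has boundary smaller than $4k+9$, which is precisely what Claim~\ref{new} is for. Keeping the twist correspondence between the $C_2$- and $C_4$-types consistent across the interface is the other place demanding care.
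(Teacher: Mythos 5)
You have correctly singled out the mechanism that the paper actually uses: your mechanism (b) alone suffices, and the entire claim reduces to exhibiting, for each adjacent pair of types from the list, a set $T\subseteq V(G)-S$ straddling the two joining edges with $b(T)=0$ and exactly half of its boundary edges meeting $S$, so that Lemma~\ref{lswap} contradicts the maximality of $S$. The paper needs neither mechanism (a) nor (c), and it avoids a genuine case analysis over ordered pairs by giving a uniform recipe: from each block of the pair it selects a fixed subset depending only on the type and on which side of the interface the block sits ($\{b_i,c_i,f_i,g_i\}$ if an $E_1$-block is the left member of the pair, $\{a_i,b_i,c_i,f_i\}$ if it is the right member, the reverse selections for $E_1^T$, and all of $\{a_i,b_i,c_i,e_i,f_i,g_i\}$ for the $C_2$-, $C_2^T$-, $C_4$-, $C_4^T$-blocks), and takes $T$ to be the union of the two selections. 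The contradiction is then the direct one: $S\cup T$ contains $h$, induces a connected graph, and satisfies $\phi(S\cup T,b)=\phi(S,b)$, contradicting the choice of $S$ as largest.

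The gap is that this choice of $T$ --- which is the entire content of the claim --- is never actually made in your proposal. Your one concrete instantiation for the $E_1$--$E_1$ pair, $T=\{c_i,f_i,g_i,f_{i+1}\}$, is lopsided across the interface: its boundary contains the edge $g_{i-1}f_i$, whose $S$-side is governed by the type of block $i-1$ rather than by the two blocks under consideration, so the ``exactly half into $S$'' hypothesis of Lemma~\ref{lswap} cannot be checked from the data at hand. You acknowledge this and defer it to an unspecified ``preliminary swap'' or a colour obstruction, but that is precisely the bookkeeping that constitutes the proof, and it is left undone for this pair and entirely unaddressed for the pairs involving $C_2$- and $C_4$-types. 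You also route the conclusion through Claim~\ref{new} and a small cut-off component, where the symmetric choice of $T$ gives the cleaner and immediate contradiction with maximality. So the plan is sound and aligned with the paper's strategy, but as written it describes the proof to be found rather than supplying it.
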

\begin{proof}
We select a set according to what the types are. For $E_i$ we pick $a_i, b_i, c_i, f_i$ if it is the right neighbour and $b_i, c_i, f_i, g_i$ if it is the left neighbour. For $E_i^T$ we pick the vertices in a reverse manner. For
$C_2$, $C_2^T$, $C_4$, or $C_4^T$ we pick vertices $a_i, b_i, c_i, e_i, f_i, g_i$. In all cases Lemma~\ref{lswap} yields contradiction with maximality of $S$.
\end{proof}

We want to prove that at the end of the procedure each block has charge at most $2$, with the exception of possible right and left boundary block that may have charge $3$ higher (thus if a block is both left and right boundary, it can have charge $8$). Considering Claim~\ref{boundary}, this shows that $\partial(S)\le 2(2k+1)+6$, which contradicts Claim~\ref{largeboundary}.

We split our investigation according type the block has.  Note that the discharging works in the same way for both pair $X$ and $X^T$, thus we do not need to consider both reverse types. Similarly, we will not consider the types created from Configuration~4 as they can be obtained by twisting. Claim~\ref{cl02} and Claim~\ref{cl03} provide sufficient bounds for most block types. We will just focus on those where these lemmas do not give sufficiently good bound on the resulting charge.

\begin{itemize}
\item $D_1, B_2, B_3, C_3$ - We can apply Claim~\ref{cl02b} instead of Claim~\ref{cl02b}.
The left neighbour cannot be $G_2^T$, or $G_4$ and the right neighbour cannot be $G_2$, or $G_4^T$ due to Claim~\ref{path}.

\item $E_1$ - 
We need to show that the block does not receive charge in Step~3 (block of type $E_1$ sends charge in Step~3, but we need to show that it cannot receive charge at the same time). Assume, for contradiction, that the block receives charge in Step~3 from right. By Claim~\ref{cl04} and the definition of discharging in Step~3 the right neighbour of block $i$ has to have basic type $G_3$.
By Claim~\ref{path} the type has to be $G_3^T$. As block $i+1$ forwards the charge to block $i$, the basic type of block $i+2$ is one of $E_1, C_2, G_3, C_4$. But Claim~\ref{path} and Claim~\ref{tree} leave only two possibilities $C_2^T$ and $C_4$. But then block $i+3$ bust again be of type $G_3^T$, and so on. But this contradicts the fact that block $i = i+2k+1$ (in $\mathbb{Z}_{2k+1}$) is of type $E_1$.

The same argument handles also the situation then $E_1$ receives charge from left (one just has to reverse the types).

\item $I_1, J_1$ - Assume the type is $I_1$ (the other case is similar). As this is neither left or right boundary, it cannot receive charge in Step~2. As $b_{i-1}, a_{i+1} \in S$ and $g_{i-1}, f_{i+1} \not \in S$ this block cannot receive charge in Step~1 or Step~3, too.

\item $G_2$ - The block cannot receive charge from left in Step~2 as it is not right boundary. As $b_{i-1} \in S$ it cannot receive charge from left altogether. By Claim~\ref{cl02} and Claim~\ref{cl03} it gets at most $0.5$ from the right. As it sends $2.5$ to its left neighbour, the resulting charge is $2$.
 \end{itemize}
 
 This shows that the final charges are as desired. This contradicts Claim~\ref{largeboundary} and finishes the proof.

\end{document}